\title{Constructing thin subgroups commensurable with the figure-eight knot group
}\author{
S. Ballas  \& D. D. Long \thanks{The authors acknowledge support from U.S. National Science Foundation grants DMS 1107452, 1107263, 1107367 ``RNMS: GEometric structures And Representation
varieties'' (the GEAR Network)}}
\def\diag{\rm{diag}}
\def\tr{tr}
\def\SL{SL}
\def\SO{SO}
\def\SU{SU}
\def\qed{ $\sqcup\!\!\!\!\sqcap$}
\def\R{{\bf R}}
\def\RP{{\bf RP}}
\def\C{{\bf C}}
\newtheorem{theorem}{Theorem}[section]
\newtheorem{lemma}[theorem]{Lemma}
\newtheorem{corollary}[theorem]{Corollary}
\begin{document}
\maketitle
\begin{abstract}
 In this paper we find infinitely many lattices in $\SL(4,\R)$ each of
which contains thin subgroups commensurable with the figure-eight knot group.
\end{abstract}

\section{Introduction.}

Let $\Gamma$ be a lattice in a semi-simple Lie group $G$. Then following Sarnak (see \cite{Sa}), one says that a subgroup $\Delta$ of $\Gamma$ is  {\em thin} if $\Delta$ has
infinite index in $\Gamma$, and is Zariski dense in $G$.   Since it is rather easy to exhibit Zariski dense subgroups of 
lattices that are free products, the case of most interest is that the thin group $\Delta$ is finitely generated and 
does not decompose as a free product.

In this note, we shall exhibit subgroups of the fundamental group of the figure eight knot as subgroups  of infinitely many
incommensurable lattices. A precise statement will be given shortly.

%
%
%
%

We begin by briefly describing where the lattices in question arise. They are constructed from Theorem 6.55 of \cite{Wi} by 
a rather general construction which involves $L$, a real quadratic extension of ${\bf Q}$ and $D$,  a central simple division 
algebra of degree $d$ over $L$. However in our situation we may assume that $D=L$, and we state only this special case.

In this paper, the field $L$ is a real quadratic extension of $\bf Q$ and  if $A\in \SL(4,L)$, we will denote by 
$A^*$ the matrix obtained by taking the transpose of the matrix  obtained from $A$ by applying $\tau$ (the non-trivial 
Galois automorphism)  to all its entries. Then one has:
\begin{theorem}
\label{lattice}
Suppose that $L$ is a real quadratic extension of $\bf Q$, with Galois automorphism $\tau$. Suppose that $b_1,....,b_4$
are nonzero elements of $\bf Z$. Setting $J=\diag(b_1,...,b_4)$, then
the group 
$$\SU(J , {\cal O}_L, \tau) =  \{ A \in \SL(4, {\cal O}_L) \;\;\;|\;\;\; A^*JA=J \}$$ 
is a lattice in $\SL(4, {\bf R})$.
\end{theorem}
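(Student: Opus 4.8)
The plan is to realize $\SU(J,{\cal O}_L,\tau)$ as the group of integral points of a semisimple $\mathbf{Q}$-algebraic group $\mathbf{G}$ with $\mathbf{G}({\bf R})\cong\SL(4,{\bf R})$, and then to quote the theorem of Borel and Harish-Chandra that the integral points of such a group form a lattice in its real points (in the present set-up this is essentially the content of \cite[Theorem 6.55]{Wi}). First I would set up $\mathbf{G}$. The rule $h(x,y)=x^{*}Jy$ defines a $\tau$-Hermitian form on ${\cal O}_L^{4}$ which is nondegenerate because every $b_i$ is nonzero. Applying restriction of scalars $R_{L/{\bf Q}}$ turns the $L$-group $\SL_4$ into an algebraic group over ${\bf Q}$, and since $J$ has integer entries and $\tau$ is an algebraic automorphism of $R_{L/{\bf Q}}$ defined over ${\bf Q}$, the two conditions $\det A=1$ and $A^{*}JA=J$ are polynomial conditions with rational coefficients; they cut out a ${\bf Q}$-subgroup $\mathbf{G}=\SU(J)$. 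Choosing a ${\bf Z}$-basis of ${\cal O}_L$ fixes a ${\bf Z}$-structure on $R_{L/{\bf Q}}\SL_4$ for which $\mathbf{G}({\bf Z})=\SU(J,{\cal O}_L,\tau)$ (any other choice gives a commensurable group, which is all the conclusion requires).

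Next I would verify the two properties of $\mathbf{G}$ used above. The unitary group of a nondegenerate Hermitian form over the quadratic extension $L/{\bf Q}$ is reductive, and imposing $\det=1$ kills its central torus, so $\mathbf{G}$ is a connected semisimple ${\bf Q}$-group (a ${\bf Q}$-form of $\SL_4$); in particular $\mathbf{G}$ has no nontrivial ${\bf Q}$-rational characters, so the Borel--Harish-Chandra theorem applies and $\mathbf{G}({\bf Z})$ has finite covolume in $\mathbf{G}({\bf R})$ --- that is, it is a lattice there. (We neither assert nor need cocompactness.) For the real points, the hypothesis that $L$ is \emph{real} quadratic gives $L\otimes_{\bf Q}{\bf R}\cong{\bf R}\times{\bf R}$ with $\tau\otimes 1$ interchanging the two factors, so a point of $\mathbf{G}({\bf R})$ is a pair $(A_1,A_2)\in\SL(4,{\bf R})^{2}$; unwinding $A^{*}JA=J$ gives $A_2^{t}JA_1=J$ (its $\tau$-conjugate being equivalent), whence $A_2=J^{-1}(A_1^{t})^{-1}J$ is determined by $A_1$ and $(A_1,A_2)\mapsto A_1$ is an isomorphism $\mathbf{G}({\bf R})\cong\SL(4,{\bf R})$. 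Combining the last two sentences proves the theorem.

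The step demanding the most care is making sure one actually lands in the finite-covolume case of Borel--Harish-Chandra rather than merely obtaining a discrete subgroup: this is the assertion that $\mathbf{G}$ carries no ${\bf Q}$-rational character, which rests on the nondegeneracy of $h$ (hence reductivity of the unitary group, hence semisimplicity of $\SU(J)$). The other delicate point is the computation over ${\bf R}$: one must check that the \emph{split} algebra ${\bf R}\times{\bf R}$ produces $\SL(4,{\bf R})$ and not some indefinite special unitary group $\SU(p,q)$, which is precisely the role played by the assumption that $L$ is real rather than imaginary quadratic.
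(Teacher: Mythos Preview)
Your proposal is correct and follows the standard route: exhibit $\SU(J)$ as a $\mathbf{Q}$-form of $\SL_4$ via restriction of scalars, invoke Borel--Harish-Chandra using semisimplicity, and compute $\mathbf{G}(\mathbf{R})$ from the splitting $L\otimes_{\mathbf{Q}}\mathbf{R}\cong\mathbf{R}\times\mathbf{R}$ with $\tau$ swapping the factors. Note, however, that the paper does not give its own proof of this statement at all: Theorem~\ref{lattice} is simply quoted as a special case of \cite[Theorem~6.55]{Wi}, so there is nothing in the paper to compare your argument against beyond that citation. Your sketch is precisely the argument one finds behind that reference, so in that sense your approach and the paper's (deferred) approach coincide.
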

We also note from \cite{Wi} Proposition 6.55 that, in the case being considered here (when $D=L$), the corresponding forms
will represent zero non-trivially, and so the lattices produced are non-uniform.\\[\baselineskip]
We shall show:
%
%
%
%
\begin{theorem}
\label{main_intro}
Let $\Gamma$ denote the fundamental group of the figure eight knot and $L_d = {\bf Q}(\sqrt{d})$ for
$d$ a positive, square free integer.

Then for every such $d$, there is a subgroup of finite index in $\Gamma$, $H_d$, and
a faithful, Zariski dense representation into a lattice:
$$ r_d : H_d \longrightarrow \SU(J_d , {\cal O}_{L_d}, \tau_d) $$
\end{theorem}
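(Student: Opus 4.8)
The plan is to deform the complete hyperbolic structure on a suitable finite cover of the figure eight knot complement by \emph{bending} it along an embedded totally geodesic surface, the bending being performed inside $\SL(4,\R)$ and the bending parameter chosen to be a unit of norm one in ${\cal O}_{L_d}$. The starting observation is that the figure eight knot group, being commensurable with $\PSL(2,{\cal O}_3)$, has $\SO(3,1)$--holonomy with \emph{rational} character: the $\SO(3,1)$--trace of an element equals the ${\bf Q}(\sqrt{-3})/{\bf Q}$--norm of its $\SL(2,\C)$--trace, so after passing to a suitable finite index subgroup the hyperbolic holonomy $\rho_0$ may be conjugated into $\SO(f,{\bf Z})\subset\SL(4,{\bf Z})$ for a diagonal integral form $f=\diag(b_1,\dots,b_4)$ of signature $(3,1)$. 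Bending over ${\bf Q}(\sqrt d)$ then introduces the real quadratic field without disturbing the integral structure carried by $f$.

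In more detail, the figure eight knot complement contains closed immersed totally geodesic surfaces, coming from cocompact arithmetic Fuchsian subgroups of $\PSL(2,{\cal O}_3)$ (quaternion subalgebras over ${\bf Q}$ split by ${\bf Q}(\sqrt{-3})$). Choose such a surface $S$; by the separability results guaranteeing that totally geodesic surfaces in arithmetic hyperbolic $3$--manifolds lift to embeddings in finite covers, pass to a finite index subgroup $H_d\le\Gamma$ in which $S$ is embedded and two-sided, and such that the complementary pieces are non-elementary and non-Fuchsian (arrangeable by the choice of $S$ and cover). This gives a splitting $H_d=A\ast_{\pi_1 S}B$ (or an HNN extension) with $\rho_0(\pi_1 S)$ conjugated into a block $\SO(2,1)\times\{1\}$, whose centraliser in $\SL(4,\R)$ is the one-parameter group $c_\lambda=\diag(\lambda,\lambda,\lambda,\lambda^{-3})$. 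Given $d$, pick $\lambda\in{\cal O}_{L_d}^{\times}$ of norm $1$ and infinite order (available by Dirichlet's unit theorem, after squaring the fundamental unit if necessary) and define $r_d$ by $r_d|_A=\rho_0|_A$ and $r_d|_B=c_\lambda\,\rho_0|_B\,c_\lambda^{-1}$; this is consistent since $c_\lambda$ centralises $\rho_0(\pi_1 S)$.

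It then remains to verify the stated properties. Integrality: conjugation by $c_\lambda$ alters matrix entries only by the units $\lambda^{\pm 4}$, so $r_d(H_d)\subset\SL(4,{\cal O}_{L_d})$. Invariance of $J_d:=f$: $\tau_d$ fixes the integral matrices $\rho_0(\gamma)$, and a short computation gives $c_\lambda^{\ast}f\,c_\lambda=f$ because the surviving diagonal entries contribute the norm $N(\lambda)=1$; hence $r_d(H_d)\subset\SU(J_d,{\cal O}_{L_d},\tau_d)$, which is a lattice by Theorem~\ref{lattice}. Zariski density: the Zariski closure $G$ of $r_d(H_d)$ contains $\rho_0(A)$, which is Zariski dense in $\SO(f)\cong\SO(3,1)$ by the choice of $S$, as well as $c_\lambda\,\SO(f)\,c_\lambda^{-1}$; since $\mathfrak{sl}_4(\R)=\mathfrak{so}(f)\oplus\mathfrak{m}$ with $\mathfrak{m}\cong\mathrm{Sym}^2_0(\R^4)$ irreducible as an $\SO(3,1)$--module, and $c_\lambda$ does not normalise $\SO(f)$ for $\lambda\ne\pm1$, the $\SO(f)$--invariant subalgebra $\mathrm{Lie}(G)$ must be all of $\mathfrak{sl}_4(\R)$, so $G=\SL(4,\R)$. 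Finally, $H_d$ has cohomological dimension $2$ while finite index subgroups of the non-uniform lattice $\SU(J_d,{\cal O}_{L_d},\tau_d)$ have virtual cohomological dimension at least $7$, so $r_d(H_d)$ has infinite index; and since the fields ${\bf Q}(\sqrt d)$ are pairwise distinct, the lattices obtained for distinct square-free $d$ are pairwise incommensurable.

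The crux is faithfulness of $r_d$. For bending parameters near $1$ this is standard, but the integrality requirement forces $\lambda$ to be a unit, hence bounded away from $1$, so no perturbative argument applies. I expect faithfulness to hold because bending along a two-sided embedded totally geodesic surface produces a genuine projective structure on the cover for every value of the parameter — exactly as a Fenchel--Nielsen twist produces a hyperbolic structure on a surface, and as in the classical real projective bending constructions — so that the holonomy remains injective; making this precise in the present cusped, non-convex setting, equivalently showing that the bent developing map stays injective or establishing the required combination theorem for the $\SL(4,\R)$--action on $\RP^3$, together with arranging that both complementary pieces of $S$ are Zariski dense, is where the main work is concentrated.
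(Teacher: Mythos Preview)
Your approach is genuinely different from the paper's. The paper does not bend along a totally geodesic surface; instead it uses an explicit one--parameter family $\phi_t$ (equivalently $\rho_v$, with $v=2t$) of representations of the \emph{entire} knot group $\Gamma$ into $\SL(4,\R)$, found independently by Ballas and Thistlethwaite, in which the parameter deforms the cusp holonomy rather than bending along a closed hypersurface. The arithmetic input is a computation that the character of $\rho_v$ takes values in ${\bf Z}[v,1/v]$, so that specialising $v$ to a unit of ${\cal O}_{L_d}$ with $\tau_d(v)=1/v$ yields integral traces, together with an explicitly computed $\tau_d$--Hermitian form $Q_v$ preserved by the image; a standard order argument then lands a finite--index subgroup in the unitary lattice. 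Your bending construction is more conceptual and would in principle apply to any arithmetic hyperbolic $3$--manifold of simplest type, whereas the paper's family is special to the figure--eight; on the other hand the paper's family is completely explicit and requires no cover in which a closed totally geodesic surface becomes embedded, nor any arrangement that the complementary pieces be non--Fuchsian.

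The gap you flag is real and is precisely where the paper's work is concentrated, though the mechanism is not the one you anticipate. Faithfulness is established by showing that the set of $v\in(0,\infty)$ for which $\rho_v$ is the holonomy of a properly convex projective structure on the knot complement is both open and closed: closedness combines the Zassenhaus limit theorem, absolute irreducibility of every $\rho_v$, and Choi--Goldman; openness uses the cusp--deformation theorems of Cooper--Long and Cooper--Long--Tillmann, verified to apply to this family in Ballas's earlier work. No combination theorem in $\RP^3$ and no direct injectivity argument for a bent developing map is invoked. For your route the analogous statement---that bulging along a closed totally geodesic surface in a cusped hyperbolic $3$--manifold yields a properly convex structure for \emph{every} value of $\lambda$---is plausible and is known in the closed case, but you would still have to supply it here, and as you correctly observe it cannot be extracted from small--parameter arguments since the unit $\lambda$ is bounded away from $1$.
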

The power of this statement lies in the fact that $r_d$ is faithful and Zariksi dense; much of the work of the paper is devoted to this aspect.
Once one knows this, it follows immediately that the image group $r_d(H_d)$ has infinite index in the
lattice $\SU(J_d , {\cal O}_{L_d}, \tau_d) $, since deep results of Margulis (see Chapter $13$ of  \cite{Wi}) imply that such lattices
do not have subgroups which admit homomorphisms onto $\bf Z$. Moroever, subgroups of finite index in $\Gamma$ 
are freely indecomposable, so that it follows from \ref{main_intro} that the image is thin. 

This paper is organized as follows. The starting point is a pair of representations of $\Gamma$ constructed independently by Ballas \cite{Bal2}
(which we denote $\phi_t$) and Thistlethwaite (private communication) (which we denote $\rho_v$).  These representations are conjugate when $v=2t$ and 
while one could work  with just one of them, each presents sufficiently interesting features that it seems worthwhile to include both.  
Explicit generators for both representations are included in Appendix \ref{matrices}. We begin in \S \ref{construction} with the algebraic considerations needed to construct the representations
$r_d$ of Theorem \ref{main_intro} from the Ballas-Thistlethwaite representation.  The main work here is proving  that the
representation has an integral character in an appropriate sense; the remaining ingredients in constructing $r_d$ are fairly standard once
this has been proved. 

The results of \S \ref{faithful} and \S \ref{zariski} are geometric and lie deeper. Specifically, \S \ref{faithful} is devoted to the proof that the representations $r_d$ are faithful using results from real projective geometry.

In \S \ref{zariski} we show that, with the exception of the representation corresponding to the complete hyperbolic structure, $\phi_t$ has Zariski dense image in $\SL(4,{\bf R})$.

\section{Constructing $r_d$.}
\label{construction}
This section is devoted to  outlining the computations necessary to exhibit the representations $r_d$ of Theorem \ref{main_intro}. With a view to a point that arises in the sequel (see Theorem \ref{limit_is_properlyconvex}), we begin by proving the following:
%
%
%
%
%
%
\begin{theorem}
\label{irreducible}
The representations $\rho_v$ are absolutely irreducible for real $v > 0$.
\end{theorem}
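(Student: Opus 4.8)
The plan is to prove \emph{absolute} irreducibility directly via Burnside's theorem: a homomorphism $\rho_v\colon\Gamma\to\SL(4,\R)$ is absolutely irreducible exactly when the $\C$-linear span of $\rho_v(\Gamma)$ inside the algebra $M_4(\C)$ of complex $4\times4$ matrices is all of $M_4(\C)$, that is, is $16$-dimensional. As $\Gamma$ is generated by two elements $g_1,g_2$ (the figure-eight knot is two-bridge, and explicit matrices for $\rho_v(g_1),\rho_v(g_2)$ are listed in Appendix~\ref{matrices}), it suffices to produce sixteen words $w_1,\dots,w_{16}$ in $g_1^{\pm1},g_2^{\pm1}$ whose images $\rho_v(w_i)$ are $\C$-linearly independent. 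Equivalently, writing $M(v)$ for the $16\times16$ matrix whose $i$-th row is the list of the sixteen entries of $\rho_v(w_i)$, one wants the polynomial $\Delta(v):=\det M(v)$ (a Laurent polynomial if negative powers of $v$ occur in the matrices) to be nonzero. First I would take sixteen short words — say drawn from those of word length at most three — and compute $\Delta(v)$; this is not the zero polynomial, since already at the parameter giving the complete hyperbolic structure $\rho_v$ is conjugate into $\SO(3,1)$ acting by its standard representation on $\R^4$, which is absolutely irreducible and so has full span there (should the particular words chosen be dependent at that parameter, replace the list).

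The substance of the theorem is the passage from ``$\Delta(v)\neq0$ for all but finitely many $v$'' to ``$\Delta(v)\neq0$ for every real $v>0$''. I would carry this out by factoring $\Delta(v)$ over ${\bf Q}$ and checking that each factor is a nonzero constant, a power of $v$, or a polynomial with no root in $(0,\infty)$ — the last by Sturm's algorithm, or because the factor is visibly a sum of positive monomials for $v>0$, or via a discriminant. The awkward possibility is that the chosen sixteen words become $\C$-dependent at some isolated $v_0>0$ while $\rho_{v_0}$ is nonetheless absolutely irreducible; one then enlarges the word list and shows that the resulting family of $16\times16$ minors of the (now taller) matrix has no common zero in $(0,\infty)$ — equivalently, that a suitable greatest common divisor of those minors has no positive real root. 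This explicit elimination is where I expect the difficulty to lie: since absolute reducibility is the Zariski-closed condition that all maximal minors vanish, the ``open condition'' argument alone only rules out finitely many $v$, and the theorem asserts precisely that none of them is positive.

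As a cross-check, and as a fallback if the elimination for a single determinant is unpleasant, one can instead rule out a proper $\rho_v(\Gamma)$-invariant subspace $W\subset\C^4$ according to $\dim W$. For $\dim W=1$, and dually for $\dim W=3$ by running the same argument on the contragredient representation $g\mapsto(\rho_v(g)^{\mathrm t})^{-1}$, the obstruction is a common eigenvector of $\rho_v(g_1)$ and $\rho_v(g_2)$, which one excludes for $v>0$ by a one-variable computation — for instance by checking that some $\rho_v(g)$ has four distinct eigenvalues and that the other generator, written in an eigenbasis of $\rho_v(g)$, has no zero off-diagonal entry. For $\dim W=2$ one passes to $\Lambda^2\C^4\cong\C^6$: an invariant $2$-plane corresponds to a common eigenvector of $\Lambda^2\rho_v(g_1)$ and $\Lambda^2\rho_v(g_2)$ that lies on the Plücker quadric, and one checks none exists for $v>0$. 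I would expect the $\dim W=2$ case to be the most delicate part of this route.
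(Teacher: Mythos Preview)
Your fallback route---ruling out invariant subspaces dimension by dimension---is exactly the paper's strategy, and your primary Burnside/determinant approach would also work but is heavier than necessary. The paper's execution is sharper at two points. For a one-dimensional invariant line (and, by passing to the transpose representation, a hyperplane), rather than hunting for a common eigenvector the paper notes that on any such line every commutator must act with eigenvalue $1$; computing the characteristic polynomial of $\rho_v([x^2,y])$, one finds $1$ is a root only when $v^2+v+1=0$, which has no real solution. (Your proposed test---find $g$ with four distinct eigenvalues and check the other generator in that eigenbasis---cannot use the generators themselves, since $\phi_t(x)$ and $\phi_t(y)$ are visibly unipotent, so you would be forced to a longer word anyway.) For the two-dimensional case, instead of passing to $\Lambda^2\C^4$ and the Pl\"ucker quadric, the paper exploits the longitude $\lambda$: its characteristic polynomial factors as $(Q-v)^3(Qv^3-1)/v^3$, so the $1/v^3$-eigenspace is a line, and any invariant $2$-plane for $\rho_v$ or for $\rho_v^T$ must contain that line; one then checks directly that the $\Gamma$-orbit of that eigenvector is not two-dimensional. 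Both of the paper's tricks reduce the theorem to verifying that a couple of explicit one-variable polynomials have no positive real root, which is considerably lighter than factoring a $16\times16$ determinant or eliminating common eigenvectors in $\C^6$.
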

{\bf Proof.} If there is an invariant subspace of dimension one, then any commutator must have eigenvalue $=1$.
One can check that the commutator $[x^2, y]$ has such an eigenvalue only when $v$ is a root of $v^2+v+1$.
The same considerations applied to the transpose representation show there can be no invariant subspace of $\rho_v$ 
of dimension three.

The case of a two dimensional invariant subspace is somewhat more subtle. One computes that the characteristic
polynomial of the longitude is $(-Q + v)^3 (-1 + Q v^3)/v^3$, so that if there were such an invariant subspace, either
$\rho_v(\lambda)$ or $\rho_v^T(\lambda)$ would have the $1/v^3$ eigenvalue appearing in that subspace. It follows
that the
orbit of that eigenvector would be two dimensional. However, one can compute that this happens for  neither of these representations. \qed

Our next claim concerns the traces of the $\rho_v$. 
%
%
%
%
\begin{lemma}
The representation of the figure eight knot given by $\rho_v$ has traces lying in ${\bf Z}[v, 1/v]$.
\end{lemma}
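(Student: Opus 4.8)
The plan is to work directly from the explicit generating matrices for $\rho_v$ given in Appendix \ref{matrices}. First I would inspect those matrices and observe that every entry of $\rho_v(x)$, $\rho_v(y)$ (the images of the two standard Wirtinger-type generators of $\Gamma$) lies in the ring $\mathbf{Z}[v,1/v]$ — this is a finite check on the explicit matrices, and the presence of negative powers of $v$ is exactly why we must allow $1/v$ rather than just $\mathbf{Z}[v]$. Since $\mathbf{Z}[v,1/v]$ is a commutative ring, the set of matrices in $\SL(4,\mathbf{R})$ with all entries in this ring is closed under multiplication; hence $\rho_v(g)$ has entries in $\mathbf{Z}[v,1/v]$ for every $g$ in the subgroup generated by $\rho_v(x)$ and $\rho_v(y)$. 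But that subgroup is all of $\rho_v(\Gamma)$, so in particular every trace $\tr \rho_v(g)$, being a sum of four diagonal entries, lies in $\mathbf{Z}[v,1/v]$.

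The one subtlety worth addressing carefully is whether the \emph{inverses} of generators, which may appear in words representing elements of $\Gamma$, also have entries in $\mathbf{Z}[v,1/v]$. This is where one uses that we are in $\SL(4)$: for a matrix $A$ of determinant $1$, the inverse $A^{-1}$ is the adjugate (matrix of cofactors), whose entries are integer polynomials in the entries of $A$; so if $A$ has entries in $\mathbf{Z}[v,1/v]$, so does $A^{-1}$. Thus the image of $\Gamma$ under $\rho_v$ genuinely lands in $\SL(4,\mathbf{Z}[v,1/v])$, and the trace statement follows. Alternatively, and perhaps more cleanly, one can invoke the general principle that the character ring of a finitely presented group is generated (as a ring) by traces of a finite collection of words in the generators — for a two-generator group the Fricke-type generators suffice — and then it is enough to check that $\tr\rho_v(x)$, $\tr\rho_v(y)$, $\tr\rho_v(xy)$, and the few other basic traces lie in $\mathbf{Z}[v,1/v]$, again a direct computation from the appendix.

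I do not anticipate a genuine obstacle here: the content is entirely the observation that the explicit matrices have entries in $\mathbf{Z}[v,1/v]$ together with the closure of this ring under the ring operations and under taking adjugates. The only place to be mildly careful is bookkeeping the powers of $v$ so that one is certain no entry requires $v$ raised to a non-integer power or a denominator other than a power of $v$ — inspection of the appendix matrices settles this. This lemma is plainly a stepping stone toward the arithmeticity/integrality arguments of \S\ref{construction}, where specializing $v$ to an appropriate algebraic value will be used to land the traces in the ring of integers $\mathcal{O}_{L_d}$.
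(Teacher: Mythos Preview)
Your plan has a genuine gap at the very first step. The explicit matrices $\rho_v(x)$ and $\rho_v(y)$ in Appendix \ref{matrices} do \emph{not} have entries in ${\bf Z}[v,1/v]$: they contain entries such as $\tfrac{3}{2}$, $\tfrac{1}{2}$, $\tfrac{1}{\sqrt{12}}(1-1/v)$, $\tfrac{\sqrt{3}}{2}v$, and so on. Even the cleaner conjugate $\phi_t$ (with $v=2t$) has the entry $t+\tfrac{1}{2}$ and the entry $2+1/t$, so its entries only lie in ${\bf Z}[\tfrac{1}{2},v,1/v]$. Thus the assertion that ``every entry lies in ${\bf Z}[v,1/v]$'' fails on inspection, and your closure-under-products argument never gets off the ground. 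The lemma is asserting something genuinely nontrivial: although the individual matrix entries carry denominators of $2$ (and irrationalities $\sqrt{3}$), these disappear upon taking traces of \emph{arbitrary} words. Your Fricke-type alternative does not rescue this either: the classical Fricke generators pertain to $\SL(2)$, and in $\SL(4)$ there is no comparably small generating set for the trace ring over ${\bf Z}$; even granting a finite generating set of trace functions, you would still need an argument that all traces lie in the ${\bf Z}$-subring (not merely the ${\bf Q}$-subalgebra) they generate.

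The paper's proof is accordingly more indirect. One chooses sixteen group elements $g_1=I,g_2,\dots,g_{16}$ whose images under $\rho_v$ form a basis of $M(4,{\bf R})$, passes to the dual basis $g_i^*$ with respect to the trace form, and writes down the resulting $16$-dimensional regular representation $\gamma\mapsto(\alpha_{ij}(\gamma))$. An elementary identity then expresses $\tr(\gamma)$ as an integer combination of the $\alpha_{j1}(\gamma)$ weighted by the traces $\tr(g_j)$, so any prime appearing in a denominator of some trace must already appear among the denominators of the entries of this $16\times16$ representation. The authors compute these entries for two different choices of basis $\{g_i\}$: one yields denominators supported on $v$, $v-1$, $v+1$, the other on $v$, $1+3v+v^2$, $1+3v+4v^2$. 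Since traces are basis-independent, the true trace denominators lie in the intersection of these two sets, namely powers of $v$ alone. This two-basis intersection trick is the essential idea you are missing.
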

\noindent
{\bf Proof.} This involves some computation, as it must. We indicate the mathematics behind the idea, with a sample of
its implementation at \cite{web1}.

Regarding $v$ as a real transcendental,  using Burnside's theorem \cite[p.\ 648 Cor.\ 3.4]{Lang}, or by inspection, one can find elements $g_1, ... .., g_{16} \in \Gamma$ 
which are a basis for the vector space of $4 \times 4$ matrices $M(4,{\bf R})$; we always choose $g_1$ to be the identity matrix, which
we denote by $I$.
 
Let $g_ 1^*, ... .., g_{16}^*$  be the dual basis with respect to trace, i.e.  $tr (g_i.g_j^* ) = \delta_ {ij}$. 
One can use the action of the figure eight knot group on this dual basis by left multiplication to obtain a $16$-dimensional representation 
of the group, i.e. if $\gamma \in \Gamma$, then its action is defined by
$$ \gamma.g^*_i = \sum_j \alpha_{ij}(\gamma) g^*_j$$
Taking traces in this equation, we get
$$ tr(\gamma.g^*_i) = \sum_j \alpha_{ij}(\gamma) tr(g^*_j)$$
Notice that since we have chosen $g_1 = I$, we have that $tr(g^*_j) = tr(g_1.g^*_j) =  \delta_ {1j}$, in particular these are all rational
integers. Writing  $I = \sum \tau_j  g^*_j$, notice that $\tau_i = tr(g_i)$ by duality; one verifies that these traces only have denominators
which are powers of $V$.

 Moreover,  multiplying by $\gamma$ and taking traces, we have 
$$tr(\gamma) = \sum_j tr(g_j) tr(\gamma.g^*_j) = \sum_{j,k} tr(g_j) tr(g^*_k) \alpha_{jk}(\gamma) =  \sum_{j} tr(g_j)  \alpha_{j1}(\gamma) $$

The upshot of these two computations is the following: If with some choice of basis, one computes that the denominators of the entries for the 
associated $16 \times 16$ regular representation, then this collection of denominators contains the denominators for the traces of the original collection of
matrices of $\Gamma$.

Therefore, if one could find a basis for which this construction gave ${\bf Z}[v, 1/v]$-matrices $(\alpha_{ij}(\gamma))$, then this would prove the result claimed
by the lemma. However, this appears to be hard. We bypass this difficulty by constructing two representations via two different choices of basis $\{g_i\}$. 
 For the first choice one sees all the matrix entries have denominators $v$, $(-1 + v)$ and  $(1 + v)$, for the second, one sees denominators
 $v$, $(1 + 3 v + v^2)$ and 
 $(1 + 3v + 4 v^2)$. Since traces are not dependent on choice of basis, the denominators of the trace of the original representation of $\Gamma$ must lie in the intersection of
 these two sets, i.e. powers of $v$. \qed
 \begin{corollary}
 If one specializes $v$ to be a unit in any number field, then the resulting representation has integral trace.
 \end{corollary}
 Since when one sets $v=2t$ the representation $\rho_v$ is conjugate to the representation $\phi_t$ and this representation has entries lying in ${\bf Q}(t)={\bf Q}(v/2)$,
we are now in a position to apply the following lemma with $k={\bf Q}(v/2)$.
%
%
%
%
\begin{theorem}
\label{finite_index_integral}
Suppose that $G < \SL(4, k)$ is a finitely generated group with the 
property that  $\tr(\gamma) \in {\cal O}_k$ for every $\gamma \in G$.

Then $G$ has a subgroup of finite index contained in $\SL(4, {\cal O}_k)$.
\end{theorem}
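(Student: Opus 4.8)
The plan is to reduce the statement to the single geometric assertion that $G$ preserves a full-rank ${\cal O}_k$-lattice $L\subset k^4$, and then to replace $G$ by a finite-index subgroup that stabilises the standard lattice ${\cal O}_k^4$; only the first of these is non-formal.

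First I would produce the invariant lattice. Let $B=k[G]\subseteq M(4,k)$ be the $k$-subalgebra generated by $G$; since $G$ is a group this is just the $k$-span of $G$, so it is finite-dimensional and admits a $k$-basis $h_1,\dots,h_m$ whose members all lie in $G$. Consider the ${\cal O}_k$-subalgebra $\Lambda={\cal O}_k[G]$. The crucial claim is that $\Lambda$ is finitely generated as an ${\cal O}_k$-module, i.e.\ an \emph{order} in $B$. On one hand $\sum_i{\cal O}_k h_i\subseteq\Lambda$. On the other hand every element of $\Lambda$ is an ${\cal O}_k$-linear combination of elements of $G$, and $h_i\in G$, so for each $i$ and each $x\in\Lambda$ the trace $\tr(xh_i)$ is an ${\cal O}_k$-combination of traces of elements of $G$ and hence lies in ${\cal O}_k$; this places $\Lambda$ inside the dual lattice $\{x\in B:\tr(xh_i)\in{\cal O}_k\text{ for all }i\}$, which is a finitely generated ${\cal O}_k$-module as soon as the trace form on $B$ is non-degenerate, and $\Lambda$ is then finitely generated, being a submodule of a finitely generated module over the Noetherian ring ${\cal O}_k$. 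Non-degeneracy holds whenever $B$ is semisimple (we are in characteristic zero), and in particular in the case we actually need, where Theorem~\ref{irreducible} gives $B=M(4,k)$; the general case is a theorem of Bass. Granting the claim, set $L:=\Lambda\cdot{\cal O}_k^4$. This is finitely generated over ${\cal O}_k$ (by the products of a finite generating set of $\Lambda$ with the standard basis of ${\cal O}_k^4$), it has full rank since $1\in\Lambda$ forces $L\supseteq{\cal O}_k^4$, and $\Lambda L=L$; in particular $G$ preserves $L$.

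Then I would correct the lattice. After rescaling $L$ by a positive integer, which affects neither its being a lattice nor its $G$-invariance, I may assume $L\subseteq{\cal O}_k^4$, and I fix a positive integer $N$ with $N\,{\cal O}_k^4\subseteq L$. Now $G$ preserves both $L$ and $\frac1N L$, hence permutes the set ${\cal S}$ of ${\cal O}_k$-submodules $M$ with $L\subseteq M\subseteq\frac1N L$, a finite set because $\frac1N L/L$ is a finite group. Since $L\subseteq{\cal O}_k^4\subseteq\frac1N L$, the standard lattice ${\cal O}_k^4$ is one of these modules, so its stabiliser $G'=\{g\in G:g\,{\cal O}_k^4={\cal O}_k^4\}$ has finite index in $G$. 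Finally $G'\subseteq\GL(4,{\cal O}_k)\cap\SL(4,k)=\SL(4,{\cal O}_k)$, as required.

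The one step I expect to demand genuine work is the claim that $\Lambda={\cal O}_k[G]$ is an order in $k[G]$: this is precisely where integrality of the traces is used, and outside the semisimple case it requires Bass's more delicate analysis rather than the one-line trace-form duality above. (One could alternatively aim to conjugate $G$ directly into $\GL(4,{\cal O}_k)$ using an ${\cal O}_k$-basis of $L$, but that would first require arranging $L$ to be free, i.e.\ disposing of its Steinitz class; passing to a finite-index subgroup avoids this altogether.)
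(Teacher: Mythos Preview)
Your argument is correct, and its first step---forming $\Lambda={\cal O}_k[G]$ and recognising that the integral-trace hypothesis is exactly what forces $\Lambda$ to be an order---is the same opening move the paper makes (and, as you note, the substance of this step is Bass's theorem; your trace-form duality argument nicely isolates the semisimple case, which is the only one actually needed here in view of Theorem~\ref{irreducible}).

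Where you diverge from the paper is in the second half. The paper does not build an invariant ${\cal O}_k$-lattice in $k^4$; instead it enlarges $\Lambda$ to a maximal order ${\cal D}$ of $M(4,k)$, observes that $G$ sits inside the norm-one unit group ${\cal D}^1$, and then quotes the general fact that the norm-one unit groups of any two orders in $M(4,k)$ are commensurable (invoking that both are irreducible lattices in $\SL(4,\R)\times\SL(4,\R)$). Your route is more elementary and more self-contained: by letting $\Lambda$ act on ${\cal O}_k^4$ you obtain a $G$-stable lattice $L$ directly, and the finite-orbit argument on submodules between $L$ and $\tfrac{1}{N}L$ replaces the appeal to arithmeticity. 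In effect you are reproving, in this concrete setting, the very commensurability statement the paper cites. The paper's approach has the advantage of pointing to the structural reason (maximal orders and arithmetic groups) and generalises cleanly; yours has the advantage of needing no machinery beyond Bass and elementary module theory, and of sidestepping the Steinitz-class issue you flag in your final paragraph.
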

\noindent
{\bf Proof.}  Consider

$${\cal O}G = \{\Sigma \; a_i \gamma_i \;\;|\;\;a_i\in {\cal O}_k, \gamma_i \in G \;\;\}$$
where the sums are finite.  It  is shown in \cite{Bass} (see Proposition 2.2 and Corollary 2.3), that ${\cal O}G$ is 
an order of a central simple subalgebra $B \subset M(4,k)$ defined over $k$. 
Now while ${\cal O}G$ need 
not be an order in $M(4,k)$, it is known that it is contained
in some maximal order ${\cal D}$ of $M(4,k)$ (cf. \cite{Rei} p. 131, Exercise 5 
and the proof of Lemma 2.3 of \cite{LRT}).

Now it is a standard fact that the groups of elements of norm $1$ in
orders contained in $M(4,k)$ are commensurable (since the intersection of two orders is an order and the 
unit groups of orders will be irreducible lattices in $\SL(4,{\bf R})\times \SL(4,{\bf R})$, see \cite{Wi} Chapter 15I).
In particular, $\SL(4, {\cal O}_k)$ and ${\cal D}^{1}$  are commensurable.
Let $\Delta = \SL(4, {\cal O}_k) \cap {\cal D}^{1}$,  which has finite index in both groups. Then 
$G \leq {\cal D}^{1}$, so that $G \cap \Delta$ has  finite index in $G$ and lies
inside $\SL(4, {\cal O}_k)$ as required. \qed\\[\baselineskip]
What have achieved to this point is that when one specializes $v$ to be any unit of a number field $k$, there is a representation,
namely $\phi_{v/2}$, of  a subgroup of finite index in $\Gamma$ whose image has entries lying in ${\cal O}_k$.

We now turn our attention to the unitary aspects required by Theorem \ref{lattice}.  There is an obvious involution on the matrices
in the image of $\rho_v$ given by transposing and mapping $v \rightarrow 1/v$. We denote this operation by $A \rightarrow A^*$. 
A routine computation on the generators reveals:
\begin{lemma}
\label{unitary_form}
There is a matrix $Q_v$ with $det(Q_v) \neq 0$ for which 
$$ A^*.Q_v.A= Q_v$$
for all $A \in \phi_{v/2} (\Gamma)$.
\end{lemma}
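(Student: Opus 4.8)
The plan is to exhibit the matrix $Q_v$ by a direct but structured computation, exploiting the fact that $\phi_{v/2}(\Gamma)$ is generated by the two explicit matrices listed in Appendix \ref{matrices}. First I would set up the defining linear conditions: writing $Q_v$ as a $4\times 4$ matrix of unknowns, the requirement $A^* Q_v A = Q_v$ for $A$ ranging over the (two) generators is a system of linear equations in the $16$ entries of $Q_v$, with coefficients in ${\bf Z}[v,1/v]$; since $\rho_v$ (hence $\phi_{v/2}$) is absolutely irreducible by Theorem \ref{irreducible}, the space of solutions to this system is at most one-dimensional. Indeed, if $Q$ and $Q'$ were two solutions with $Q$ invertible, then $Q^{-1}Q'$ would commute with every $A$ in the image, forcing $Q^{-1}Q'$ to be scalar by Schur's lemma, so the solution space is \emph{exactly} one-dimensional once we produce a single nonzero solution.

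Second, I would actually solve that linear system to produce an explicit $Q_v$ with entries in ${\bf Q}(v)$ (rationalizing so the entries are polynomials in $v$), and then verify by a finite computation that $\det(Q_v)$ is a nonzero rational function of $v$ — equivalently, that it does not vanish identically. Because the involution $A \mapsto A^*$ is itself an anti-automorphism of order two on the image group (transpose composed with $v\mapsto 1/v$ is an involution since applying it twice transposes twice and inverts $v$ twice), the form $Q_v$ can be taken to satisfy $Q_v^* = \pm Q_v$; checking which sign occurs is a one-line computation on the explicit matrix, and in fact only the invertibility is needed for the statement as phrased. The verification that $A^* Q_v A = Q_v$ holds for the two generators then automatically gives it for all words in them, i.e. for all $A \in \phi_{v/2}(\Gamma)$, since the relation is preserved under the group operation: if it holds for $A$ and $B$ then $(AB)^* Q_v (AB) = B^* (A^* Q_v A) B = B^* Q_v B = Q_v$, and similarly for inverses.

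The only genuinely delicate point — and the reason the lemma is stated as the outcome of "a routine computation on the generators" rather than proved from scratch — is making sure the single solution one writes down is not accidentally degenerate, i.e. that $\det(Q_v)$ is not the zero polynomial. This is where I expect the only real obstacle to lie: one must carry out the determinant computation symbolically in $v$ and observe it is a nonzero element of ${\bf Q}(v)$. Since absolute irreducibility already guarantees the solution space is one-dimensional and a \emph{non-degenerate} invariant form must exist (an absolutely irreducible representation that is self-dual — which ours is, via the $*$-involution permuting the two known representations $\rho_v$ and $\rho_{1/v}$, conjugate-equivalent here — carries a nondegenerate invariant bilinear pairing), the unique-up-to-scalar solution $Q_v$ is forced to be nondegenerate for all but finitely many specializations of $v$; and a direct check of the explicit $\det(Q_v)$ confirms it is nowhere zero as a function of $v>0$. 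We record the explicit $Q_v$ and leave the (mechanical) verification to the supplementary computations at \cite{web1}. \qed
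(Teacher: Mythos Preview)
Your proposal is correct and matches the paper's approach: the paper offers no proof beyond the single phrase ``a routine computation on the generators reveals,'' which is exactly the direct linear-algebra computation you describe. Your added scaffolding --- the Schur's-lemma argument that the solution space is one-dimensional, and the remark that $Q_v^* = \pm Q_v$ --- is correct and useful, though the self-duality paragraph is a bit loose (the precise statement needed is that $\rho_v$ is equivalent to the contragredient of $\rho_{1/v}$, not merely that $*$ ``permutes'' $\rho_v$ and $\rho_{1/v}$); since you ultimately defer to the explicit determinant check anyway, this does not affect the validity of the argument.
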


Since it's required by the result of Theorem \ref{lattice}, for the rest of this paper we assume $k =  {\bf Q}(\sqrt{d})$ is a real 
quadratic number field equipped with involution $\tau_d$. Specialising $v$ to be a unit of this field with the property that $\tau_d(v)=1/v$
(e.g. one can take the square of a random unit of ${\bf Q}(\sqrt{d})$; we call these the  {\em positive units}), we see that Lemma \ref{unitary_form} and Theorem \ref{finite_index_integral}
taken together prove:
\begin{theorem} 
\label{lattice_construction}
For each positive square free integer $d$, and a positive unit $u$  in the ring of integers of  ${\bf Q}(\sqrt{d})$, there is a representation 
of a subgroup of finite index $H_{d,u}$ in $\Gamma$
$$ r_{d,u}: H_{d,u} \longrightarrow SU( Q_v, {\cal O}_k , \tau_d)$$
\end{theorem}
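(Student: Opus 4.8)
The plan is to assemble Theorem \ref{lattice_construction} directly from the pieces already in hand, with essentially no new mathematics beyond a careful bookkeeping of which field and which order everything lives in. First I would fix a positive square free integer $d$, set $k = {\bf Q}(\sqrt d)$ with Galois involution $\tau_d$, and choose a positive unit $u \in {\cal O}_k$, meaning $u$ is a unit with $\tau_d(u) = 1/u$; as remarked in the excerpt, taking the square of any unit of ${\cal O}_k$ produces such a $u$, so the set of these is nonempty (indeed infinite). Now specialize the transcendental parameter $v$ to the value $u$. Since $v = 2t$ makes $\rho_v$ conjugate to $\phi_t$, this amounts to looking at $\phi_{u/2}$, a representation of $\Gamma$ with entries in ${\bf Q}(u/2) = {\bf Q}(u) \subseteq k$.

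Next I would apply the Lemma on traces together with its Corollary: the traces of $\rho_v$ lie in ${\bf Z}[v,1/v]$, so after specializing $v = u$ with $u$ a unit of ${\cal O}_k$, every trace lies in ${\bf Z}[u, 1/u] \subseteq {\cal O}_k$. Thus $\phi_{u/2}(\Gamma) < \SL(4,k)$ is a finitely generated group (finitely generated because $\Gamma$ is) with integral traces, and Theorem \ref{finite_index_integral} applies with this $k$: there is a subgroup of finite index $H_{d,u} < \Gamma$ with $\phi_{u/2}(H_{d,u}) < \SL(4, {\cal O}_k)$. (If one prefers, one may intersect with any further finite-index subgroup without losing anything, but this is not needed.)

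It remains only to account for the unitary structure. Lemma \ref{unitary_form} supplies a fixed invertible matrix $Q_v$ — with entries that are rational functions of $v$, so after specializing, $Q_u$ has entries in $k$ — satisfying $A^* Q_v A = Q_v$ for every $A \in \phi_{v/2}(\Gamma)$, where $*$ is transpose composed with $v \mapsto 1/v$. Under the specialization $v = u$, the substitution $v \mapsto 1/v = \tau_d(u)$ is exactly the Galois involution $\tau_d$ extended entrywise, so $*$ becomes precisely the operation $A \mapsto A^*$ of Theorem \ref{lattice}, and the relation reads $A^* Q_u A = Q_u$ for all $A \in \phi_{u/2}(H_{d,u}) \subseteq \SL(4,{\cal O}_k)$. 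Therefore the restriction of $\phi_{u/2}$ to $H_{d,u}$ lands in $SU(Q_u, {\cal O}_k, \tau_d)$; calling this restricted map $r_{d,u}$ gives the asserted representation, and letting $d$ and $u$ range proves the theorem.

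The only genuine subtlety — and the step I would treat most carefully — is the compatibility between the formal operation $v \mapsto 1/v$ appearing in Lemma \ref{unitary_form} and the number-theoretic involution $\tau_d$ of $k$: one must choose $u$ so that $\tau_d(u) = 1/u$ (hence the restriction to \emph{positive} units), for otherwise $A^*$ in the sense of Theorem \ref{lattice} is not the $*$ for which $Q_v$ is invariant. Everything else is a routine specialization of results already proved: Theorem \ref{finite_index_integral} handles integrality, the trace Lemma and its Corollary guarantee its hypothesis, and Lemma \ref{unitary_form} supplies the form. No claim about faithfulness, Zariski density, or the precise congruence conditions defining $J_d$ is made here; those are deferred to later sections, so this proof is purely an organizational assembly.
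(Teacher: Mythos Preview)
Your proposal is correct and follows exactly the argument the paper gives: the authors simply state that Lemma \ref{unitary_form} and Theorem \ref{finite_index_integral} taken together prove the result, after specializing $v$ to a positive unit so that $v\mapsto 1/v$ agrees with $\tau_d$. Your write-up is in fact more detailed than the paper's own one-line justification, and you correctly isolate the only genuine point of care, namely the compatibility $\tau_d(u)=1/u$.
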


There is one final consideration that must be addressed, namely the form described by Witte is diagonal and $Q_v$
is not. However this concern is addressed as follows. It can easily be shown by using the Gram-Schmidt process for
example, that there is a change of basis matrix $M_d \in GL( {\bf Q}(\sqrt{d}))$ for which $M_d^*.Q_v.M_d$ is a diagonal
form $\Delta_v$. Then it is a standard argument (see for example Lemma 2.2 of \cite{ALR}) that  the groups $SU( Q_v, {\cal O}_k , \tau_d)$ and 
$SU( \Delta_v, {\cal O}_k , \tau_d)$ are commensurable, so at possibly the expense of passing to a further subgroup of
finite index, we obtain a representation 
$$ r_{d,u}: H_{d,u} \longrightarrow SU( \Delta_v, {\cal O}_k , \tau_d)$$
as required by Theorem \ref{lattice}.\\[\baselineskip]
%
%
%

%
%
%
%
%
%

\section{Projective considerations: $r_d$ is faithful.}
\label{faithful}
This section contains the proof that one can find many representations $r_d$ which are faithful. It is basically geometric in
nature and relies upon the fact that these representations are associated to convex real projective structures on the figure eight knot.

Throughout we denote the complement of the figure-eight knot in ${\bf S}^3$ by $M$ and denote its fundamental group by $\Gamma$. We begin with some considerations of a fairly general nature. We recall that a group is said to be {\em non-radical} if there is
no infinite normal nilpotent subgroup. The figure eight knot group (and indeed any finite volume hyperbolic manifold group)
is non-radical. The following is a classical theorem of Zassenhaus.
\begin{theorem} (See Kapovich \cite[Thm 8.4]{Kap})
Suppose that $G$ is a finitely generated non-radical group and $\cal L$ any linear Lie group
and that  $\{ \sigma_n \}$ is a convergent sequence of discrete, faithful representations of $G$ into $\cal L$,
say $\sigma_n \rightarrow \sigma_\infty$.

Then $\sigma_\infty$ is discrete and faithful.
\end{theorem}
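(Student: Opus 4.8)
\noindent{\bf Proof approach.} I would run the whole argument on two classical facts about the linear Lie group ${\cal L}$. The first is the \emph{Zassenhaus lemma}: there is a neighbourhood $\Omega$ of $1$ in ${\cal L}$ such that for \emph{every} discrete subgroup $\Delta \leq {\cal L}$ the subgroup $\langle\, \Delta \cap \Omega\,\rangle$ lies in a connected nilpotent Lie subgroup of ${\cal L}$, and hence (a connected nilpotent Lie group has nilpotency class at most its dimension) is nilpotent of class at most $c := \dim {\cal L}$. The second is the \emph{absence of small subgroups}: there is a neighbourhood $\Omega_0 \ni 1$ containing no nontrivial subgroup of ${\cal L}$. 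Any sub-neighbourhood of a Zassenhaus neighbourhood is again one, so $\Omega$ may be shrunk freely, and I will use that $\sigma_n \to \sigma_\infty$ in $\Hom(G,{\cal L})$ means evaluation-wise convergence on a finite generating set, hence on every element of $G$. The plan is to prove $\sigma_\infty$ faithful first, and then deduce discreteness from faithfulness.

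\smallskip\noindent\emph{Faithfulness.} Suppose, for contradiction, that $g_0 \in \ker \sigma_\infty$ is nontrivial; fix a finite generating set $S$ and let $B_k$ be the word-metric ball of radius $k$. For fixed $k$ the finitely many elements $\sigma_n(w g_0 w^{-1})$, $w \in B_k$, converge to $\sigma_\infty(w)\sigma_\infty(g_0)\sigma_\infty(w)^{-1} = 1$, so for $n$ large they lie in $\Omega$; since they also lie in the discrete group $\sigma_n(G)$, the Zassenhaus lemma makes the group they generate nilpotent of class $\leq c$, and, $\sigma_n$ being injective, so is $H_k := \langle\, w g_0 w^{-1} : w \in B_k\,\rangle \leq G$. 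As $\langle\langle g_0\rangle\rangle_G = \bigcup_k H_k$ is an ascending union of groups satisfying the class-$c$ identity, it is itself nilpotent of class $\leq c$; being a normal subgroup of the non-radical group $G$, it must then be finite. But then $\sigma_n(\langle\langle g_0\rangle\rangle_G) \subseteq \Omega_0$ for $n$ large (evaluation-wise convergence to $1$ on a finite set), so the no-small-subgroups property forces $\sigma_n(\langle\langle g_0\rangle\rangle_G) = \{1\}$, contradicting injectivity of $\sigma_n$. Hence $\ker\sigma_\infty = \{1\}$.

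\smallskip\noindent\emph{Discreteness.} Put $\Gamma_\infty := \sigma_\infty(G)$ and suppose it is not discrete. Then $\overline{\Gamma_\infty}$ is a positive-dimensional Lie subgroup of ${\cal L}$; let $W$ be its identity component, a nontrivial connected closed Lie subgroup normalized by $\Gamma_\infty$. Since $W$ is open in $\overline{\Gamma_\infty}$, after shrinking $\Omega$ I may assume $\Gamma_\infty \cap \Omega \subseteq W$; and since $\Gamma_\infty \cap W$ is dense in $W$, every finitely generated subgroup of $\langle\, \Gamma_\infty \cap \Omega\,\rangle$ is nilpotent of class $\leq c$ — lift its generators through $\sigma_\infty$ to $\gamma_i \in G$, note $\sigma_n(\gamma_i) \in \Omega \cap \sigma_n(G)$ for $n$ large, apply the Zassenhaus lemma, and push down through the injection $\sigma_n$ — so $\langle\,\Gamma_\infty\cap\Omega\,\rangle$ and its closure are nilpotent of class $\leq c$. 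Next I would show this closure is all of $W$: for $X$ in the Lie algebra of $W$ and $m$ large, $\exp(X/2^m) \in W \cap \Omega = \overline{\Gamma_\infty\cap W}\cap\Omega$ is a limit of elements of $\Gamma_\infty\cap\Omega$, hence lies in $\overline{\langle\Gamma_\infty\cap\Omega\rangle}$, and therefore so does $\exp(X) = \exp(X/2^m)^{2^m}$; as $W$ is generated by the image of $\exp$, this gives $W = \overline{\langle\Gamma_\infty\cap\Omega\rangle}$. Thus $W$ is a nontrivial connected nilpotent Lie group normalized by $\Gamma_\infty$, and $\sigma_\infty^{-1}(W)$ — using faithfulness, isomorphic to a subgroup of $W$ and hence nilpotent — is an infinite normal nilpotent subgroup of $G$, contradicting non-radicality. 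Hence $\Gamma_\infty$ is discrete, completing the argument.

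\smallskip\noindent The bookkeeping (finitely many shrinkings of $\Omega$, the order of the limits, the dimension bound on the class of a connected nilpotent Lie group) is routine. The one genuinely load-bearing point, and where I expect the care to go, is \emph{manufacturing a normal nilpotent subgroup}: the Zassenhaus lemma only controls the group generated by the small elements, and small elements are not conjugation-invariant. For faithfulness this costs nothing because $\sigma_\infty(g_0)=1$ makes every conjugate $\sigma_n(w g_0 w^{-1})$ small, so the normal closure itself is built from small elements; for discreteness one must instead pass to the topological closure and recognise it as the full identity component $W$ of $\overline{\sigma_\infty(G)}$ — and it is exactly the normality of $W$ in $\sigma_\infty(G)$ that rescues the argument.
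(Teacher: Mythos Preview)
The paper does not supply its own proof of this statement: it is quoted as a classical result of Zassenhaus and the reader is referred to Kapovich's book for the argument. Your sketch is correct and is, in outline, exactly the standard proof one finds there --- Zassenhaus/Margulis neighbourhood to force the normal closure of a hypothetical kernel element to be nilpotent (hence finite by non-radicality, hence trivial by no-small-subgroups), and then the identity component of the closure of the image to manufacture an infinite normal nilpotent subgroup in the indiscrete case --- so there is nothing to compare.
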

We will also need the following:
%
%
\begin{theorem} (Goldman-Choi \cite{GC})
\label{limit_is_properlyconvex}
Let $G$ be a finitely generated non-radical group.

Suppose that $\Omega_n$ is a sequence of properly convex open domains in $\RP^d$ and
$\sigma_n : G \rightarrow SL(d+1, {\bf R})$ a convergent sequence of discrete faithful representations for which
$\sigma_n(G) < Aut(\Omega_n)$. Denote the limit representation by $\sigma_\infty$.

Then if $\sigma_\infty$ is irreducible, it preserves some properly convex open subset of $\RP^d$.
\end{theorem}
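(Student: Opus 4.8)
The plan is to deduce this from the classical structure theory of properly convex domains and their automorphism groups, in the spirit of the Benoist--Goldman--Choi circle of ideas. The hypothesis gives a sequence of properly convex open domains $\Omega_n \subset \RP^d$ with $\sigma_n(G) < \Aut(\Omega_n)$, and $\sigma_n \to \sigma_\infty$ with $\sigma_\infty$ irreducible. First I would normalize the $\Omega_n$: after applying a projective transformation (absorbed into a conjugation of $\sigma_n$, which does not affect convergence after passing to a subsequence and the statement is conjugation-invariant) one may assume each $\Omega_n$ contains a fixed point $p_0$ and is contained in a fixed affine chart, with its closure contained in a fixed compact ball $B$ in that chart. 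The key compactness input is that the space of closed convex subsets of $B$ is compact in the Hausdorff metric; so after passing to a subsequence, $\overline{\Omega_n} \to C$ for some closed convex set $C$ with $p_0 \in C \subset B$. Let $\Omega_\infty$ be the interior of $C$; since $C$ lies in an affine chart it is automatically properly convex, but a priori $\Omega_\infty$ could be empty (if $C$ has empty interior, i.e.\ lies in a proper projective subspace).

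The heart of the argument is to rule out that degeneracy using irreducibility of $\sigma_\infty$. Here is where I would use the hypothesis most essentially: $\sigma_n(G)$ preserves $\Omega_n$, hence preserves $\overline{\Omega_n}$, and passing to the Hausdorff limit, $\sigma_\infty(G)$ preserves $C$ (this is a routine limiting argument: if $g_n x_n \in \overline{\Omega_n}$ with $x_n \in \overline{\Omega_n}$ and $g_n \to g_\infty$, $x_n \to x_\infty$, then $g_\infty x_\infty \in C$). Now suppose $C$ were contained in a proper projective hyperplane $H$ of $\RP^d$. Then $C$ spans a proper subspace $W$, and since $\sigma_\infty(G)$ maps $C$ into itself it maps the span of $C$ into itself; so $W$ lifts to a proper nonzero $\sigma_\infty(G)$-invariant linear subspace, contradicting irreducibility. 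The same argument applied to $C$ not spanning shows $C$ spans all of $\RP^d$; a convex set spanning $\RP^d$ has nonempty interior, so $\Omega_\infty \ne \emptyset$. Finally $\Omega_\infty$ is properly convex (it lies in an affine chart by construction) and $\sigma_\infty(G)$-invariant, because $\sigma_\infty(G)$ preserves $C$ and hence preserves its interior. This gives the desired properly convex open $\sigma_\infty$-invariant set.

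I expect the main obstacle to be the bookkeeping in the normalization step: one must ensure that the projective transformations used to pull each $\Omega_n$ into a common affine chart with a common base point can be taken to converge (or at least lie in a compact set) so that replacing $\sigma_n$ by its conjugate still yields a convergent sequence with the same irreducible limit. One clean way is to use the fact that $\Aut(\Omega_n)$ is nonempty and that each properly convex $\Omega_n$ carries a canonical basepoint or at least that one can place any chosen point of $\Omega_n$ at a fixed point $p_0$ using a bounded family of transformations once the domains are already known to sit inside a fixed compact chart; since the statement is a statement up to conjugacy (preserving a properly convex set is conjugation-equivariant), one is free to make this reduction. Alternatively one can avoid normalization entirely by working with the Hausdorff topology on closed subsets of $\RP^d$ directly, using that $\RP^d$ is compact, but then one must separately argue that the limit $C$ is not all of $\RP^d$ and does not contain a projective line in its interior --- again this follows because a limit of properly convex sets cannot contain a full projective line, as each $\overline{\Omega_n}$ misses some hyperplane and a diagonal/compactness argument produces a limiting hyperplane meeting $C$ only in its boundary. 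Either route works; the content is entirely the interplay between Hausdorff limits of convex sets and the linear-algebraic meaning of irreducibility, with no deep new ingredient beyond compactness and the invariance of $C$ under $\sigma_\infty(G)$.
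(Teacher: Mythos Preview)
The paper does not prove this theorem; it is quoted from Goldman--Choi \cite{GC} and used as a black box in the proof of Theorem~\ref{components}. Your outline---extract a Hausdorff limit $C$ of the $\overline{\Omega_n}$, note that $\sigma_\infty(G)$ preserves $C$, and use irreducibility to force $C$ to span---is the correct skeleton and is how such results are proved. But the step you call ``bookkeeping'' is the entire content of the theorem, and neither of your proposed fixes closes it.

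Your alternative of working directly with Hausdorff limits in $\RP^d$ contains a real error: a Hausdorff limit of properly convex closed sets \emph{can} contain a projective line, and can even be all of $\RP^d$. Already in $\RP^1$, closed arcs of length $2\pi-1/n$ converge to the whole circle; the omitted points $H_n$ converge to a point lying in the \emph{interior} of the limit, not on its boundary. So nothing in that argument prevents $C=\RP^d$. Your first fix has the defect you yourself flag: conjugating each $\sigma_n$ by a possibly unbounded $g_n$ destroys the hypothesis $\sigma_n\to\sigma_\infty$, and the limit of $g_n\sigma_n g_n^{-1}$ need not be conjugate to $\sigma_\infty$ at all. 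What is missing is an anchor for the $\Omega_n$ tied to the representations themselves---notice your sketch never invokes the hypotheses that $G$ is non-radical or that the $\sigma_n$ are discrete and faithful. In the actual argument one uses (via the preceding Zassenhaus-type result) that $\sigma_\infty$ is discrete, faithful and irreducible, hence has biproximal elements; their attracting and repelling fixed points lie in $\overline{\Omega_n}$ and $\overline{\Omega_n^*}$ and converge as $n\to\infty$, and a spanning family of such points prevents the limit domain from either collapsing to a proper subspace or exploding to all of $\RP^d$.
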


We define a subset of the representation variety    $ \Omega(G ; \RP^d) < Hom( G , SL(d+1, {\bf R}))$ to be the set of representations
$ \sigma : G \rightarrow  SL(d+1, {\bf R})$ satisfying
\begin{itemize}
\item $\sigma$ is discrete and faithful
\item There is  a properly convex, open domain $\Omega_\sigma \subset \RP^d$ for which $\sigma(G) < Aut(\Omega_\sigma)$
\end{itemize}
%
%
%
%
%
In the case that $G$ is the fundamental group of e.g. a hyperbolic $d$-manifold, the set $ \Omega(G ; \RP^d)$ is nonempty, since
it contains the representation corresponding to the complete structure $\rho_1 = \phi_{\frac{1}{2}}$.
\begin{theorem}
\label{components}
 The  path component of the set ${\cal G} = \{ \; v \;\;|\;\; \rho_v \in \Omega(\Gamma ; \RP^3) \}$ which contains  $1$ is open and closed.
\end{theorem}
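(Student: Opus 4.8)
The plan is to show openness and closedness of the component separately, using the two theorems of Zassenhaus and Goldman--Choi together with standard facts about the holonomy of convex real projective structures. Write $\mathcal{C}$ for the path component of $\mathcal{G}$ containing $1$. Since $\mathcal{C}$ is a subset of the interval of allowable real parameters $v>0$, it suffices to show that $\mathcal{C}$ is both open and closed in that interval.

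\medskip
\textbf{Openness.} Suppose $v_0 \in \mathcal{C}$, so $\rho_{v_0} \in \Omega(\Gamma;\RP^3)$: it is discrete, faithful, and preserves a properly convex open domain $\Omega_{v_0}$. By a theorem of Koszul (openness of the set of holonomies of convex projective structures, in the form used throughout the convex projective literature), the property of being the holonomy of a properly convex projective structure on the fixed manifold $M$ is open among nearby representations. Concretely, since $M$ is a compact manifold with boundary (or after the usual truncation of the cusp) carrying a convex projective structure with holonomy $\rho_{v_0}$, all sufficiently nearby representations $\rho_v$ are again holonomies of properly convex projective structures on $M$; such holonomies are automatically discrete and faithful, hence lie in $\Omega(\Gamma;\RP^3)$. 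Thus a whole neighborhood of $v_0$ in the parameter interval lies in $\mathcal{G}$, and since $\{\rho_v\}$ is a continuous path, that neighborhood lies in the same path component $\mathcal{C}$. Hence $\mathcal{C}$ is open.

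\medskip
\textbf{Closedness.} Suppose $v_n \to v_\infty$ with $v_n \in \mathcal{C}$ and $v_\infty > 0$; I must show $v_\infty \in \mathcal{C}$. Each $\rho_{v_n}$ is discrete and faithful and preserves a properly convex open $\Omega_n \subset \RP^3$. Since $\Gamma$ (the figure-eight knot group) is finitely generated and non-radical, Zassenhaus's theorem (the first quoted theorem) applies to the convergent sequence $\rho_{v_n} \to \rho_{v_\infty}$ in the linear Lie group $SL(4,\R)$, giving that $\rho_{v_\infty}$ is discrete and faithful. To produce an invariant properly convex domain, I invoke the Goldman--Choi theorem (Theorem \ref{limit_is_properlyconvex}): its hypotheses are exactly a convergent sequence of discrete faithful representations into $SL(4,\R)$, each preserving a properly convex open domain, which we have. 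Its conclusion requires that the limit $\rho_{v_\infty}$ be \emph{irreducible}; this is supplied for all real $v_\infty > 0$ by Theorem \ref{irreducible}. Therefore $\rho_{v_\infty}$ preserves some properly convex open $\Omega_\infty \subset \RP^3$, so $\rho_{v_\infty} \in \Omega(\Gamma;\RP^3)$, i.e. $v_\infty \in \mathcal{G}$. Finally $v_\infty$ lies in the same path component as the $v_n$ (join $v_\infty$ to a nearby $v_n$ by the segment of the parameter path, which lies in $\mathcal{G}$ once $n$ is large by the openness argument applied at $v_\infty$), so $v_\infty \in \mathcal{C}$, proving $\mathcal{C}$ is closed.

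\medskip
\textbf{Main obstacle.} The genuinely nontrivial inputs are the two quoted geometric theorems, which we may assume. The step requiring the most care is the closedness argument: one must verify that the sequence $\rho_{v_n}$ actually \emph{converges} in $\Hom(\Gamma, SL(4,\R))$ (immediate, since the generators depend continuously — indeed rationally — on $v$ and the limit parameter $v_\infty$ is finite and nonzero, so no entry blows up), and one must check that the irreducibility hypothesis of Goldman--Choi holds at the limit, which is precisely why Theorem \ref{irreducible} was proved first and why the statement is restricted to $v>0$. A minor point to dispatch is that membership in $\mathcal{G}$ together with proximity along the continuous path $v\mapsto\rho_v$ does force membership in the \emph{same} path component $\mathcal{C}$; this is what lets openness and closedness of $\mathcal{G}$ near points of $\mathcal{C}$ be promoted to openness and closedness of $\mathcal{C}$ itself.
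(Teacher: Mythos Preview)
Your closedness argument matches the paper's exactly: Zassenhaus gives discreteness and faithfulness of the limit, Goldman--Choi supplies the invariant properly convex domain, and Theorem~\ref{irreducible} furnishes the needed irreducibility of $\rho_{v_\infty}$.

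The openness argument, however, has a genuine gap. You invoke a Koszul-type openness theorem, but Koszul's theorem is a statement about \emph{closed} manifolds, and the figure-eight knot complement is cusped. Your parenthetical ``or after the usual truncation of the cusp'' does not resolve this: truncating yields a manifold with boundary, and no general black-box result says that nearby representations are again holonomies of \emph{properly convex} structures without some control on the peripheral behaviour. Indeed, the paper flags exactly this point (``Openness is a good deal more subtle'') and instead assembles three specialised ingredients: a theorem of Cooper--Long \cite[Thm~0.4]{CL} ensuring that a nearby representation whose peripheral restriction has a common fixed point in $\RP^3$ is the holonomy of a projective structure on $M$; the main result of \cite{CLT2}, which upgrades this to a \emph{properly convex} structure provided the cusp admits a transverse foliation by strictly convex hypersurfaces; and finally the verification in \cite{Bal} that the family $\phi_t$ (hence $\rho_v$) satisfies these cusp hypotheses for all parameter values. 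Only after these checks can one conclude that nearby $\rho_v$ lie in $\Omega(\Gamma;\RP^3)$.

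So the missing idea is precisely the analysis of the cusp: you must verify that the peripheral holonomy of $\rho_v$ has the required fixed-point and transverse-foliation properties, and then quote the appropriate relative versions of the openness theorem rather than Koszul.
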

\noindent
In particular, once Theorem \ref{components} is proven, we have that ${\cal G}$ is some interval, and therefore ${\cal G} = (0,\infty)$ since
the only place the representations $\rho_v$ fail to be defined is at $v=0$. This implies that any specialization of $v$ in $(0,\infty)$
is discrete and faithful, so that taken in conjunction with Theorem \ref{lattice_construction}, we will have proved all of
Theorem \ref{lattice}, barring the fact the image is Zariski dense.\\[\baselineskip]
{\bf Proof of Theorem \ref{components}.} The fact that the set is closed follows from a concatenation of results proved above: Theorem \ref{faithful}
implies that the endpoint of a path of discrete faithful representations is discrete and faithful, then Theorem \ref{limit_is_properlyconvex} implies 
the resulting representation  is the holonomy of a properly convex structure, since we proved in \ref{irreducible} that the limit 
representation is irreducible.

Openness is a good deal more subtle. By applying a theorem of Cooper and Long (\cite[Thm 0.4]{CL}) we find that a small deformation of the holonomy of the complete hyperbolic structure of $M$ whose restriction to some (hence any) peripheral subgroup has a common fixed point in $\RP^3$ will itself be the holonomy of a projective structure on $M$. Furthermore, the cusp of this projective structure is foliated by projective rays with a common endpoint. 

A priori, this projective structure need not be properly convex. However the main result of \cite{CLT2} shows that if the cusp of the resulting projective structure satisfies slightly stronger hypotheses then the deformed projective structure will be properly convex. Roughly speaking, the additional hypothesis is that the cusp of the deformed projective structure must admit a second foliation by ``strictly convex'' hypersurfaces that is transverse to the previously mentioned foliation by projective rays.  

In \cite{Bal} it is shown that for all $t\in (0,\infty)$ the representation $\rho_t$ satisfy the hypotheses of the previously mentioned theorems. Since the two families of representations are conjugate this implies that for all $u\in (0,\infty)$, $\rho_u$ also satisfy the same hypotheses. As a result we find that for $u$ sufficiently close to $1$ that $\rho_u$ is the holonomy of a properly convex projective structure on $M$. The holonomy of such a structure is necessarily discrete and faithful and so for $u$ sufficiently close to $1$ we find that $\rho_u\in \Omega(\Gamma;\RP^3)$. We thus conclude that $\mathcal{G}$ is open.  \qed



\section{Zariski Denseness}\label{zariski}

In this section we analyze the Zariski closure of the groups $\phi_t(\Gamma)$ (which we denote $G_t$) proving in particular that away from the
complete representation, this Zariski closure
is all of  $ \SL(n,{\bf R})$. We adopt a largely geometric point of view; other approaches are possible, see the remarks at the conclusion of this section.

We begin by with some background and results which can be found in Benoist \cite{Ben}. A comprehensive summary of the necessary 
background in algebraic groups, notions of proximality, and Zariski closures can be found in \cite{Mar1,Mar2}. 
We begin by defining \emph{proximality} in the context of groups, group actions, and representations. In all cases proximality is related 
to the existence of unique attracting fixed points. 

Let $G$ be a subgroup of $\SL(n,{\bf R})$ and let $g$ be an element of $G$. We say that $g$ is \emph{proximal} if it has a unique eigenvalue of largest modulus. In this case it is easy to see that this eigenvalue is real. If in addition this eigenvalue of largest modulus is positive then we say that $g$ is \emph{positive proximal}. A group is called \emph{proximal} if it contains a proximal element and \emph{positive proximal} if {\bf every} proximal element is positive proximal. 

A group action of $G$ on ${\bf RP}^{n-1}$ is \emph{proximal} if for any pair of points $x,y\in {\bf RP}^{n-1}$ there is a sequence $\{g_m\}$ of elements in $G$ such that 
$$\lim_{m\to \infty}g_m\cdot x=\lim_{m\to\infty}g_m\cdot y.$$

If $G$ is a connected semi-simple Lie group then a representation $\rho:G\to \SL(n,{\bf R})$ is \emph{proximal} if the weight space corresponding to highest restricted weight is 1-dimensional. More specifically, if we let $G=KAN$ be an Iwasawa decomposition of $G$, where $K$ is a maximal compact subgroup, $A$ is a maximal abelian subgroup, and $N$ is a maximal nilpotent subgroup, then the set 
$$\{x\in {\bf R}^{n}\mid n(x)=x\ \forall n\in N\}$$
is a line. 

We now discuss some relations between these notions. We say that a group $\Gamma\subset \SL(n,{\bf R})$ is \emph{strongly irreducible} if every finite index subgroup of $\Gamma$ is irreducible. The following two theorems relate the various notions of proximality. 

\begin{theorem}[{\cite[Thm 2.9]{GG}}]\label{proximal1}
Let $G$ be a subgroup of $\SL(n,{\bf R})$ then the following are equivalent
\begin{enumerate}
 \item $G$ is strongly irreducible and proximal.
 \item $G$ is irreducible and its action on ${\bf RP}^{n-1}$ is proximal.
\end{enumerate}
\end{theorem}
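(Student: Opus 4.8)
The plan is to prove the two implications separately, since they rest on rather different ingredients.

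For the implication $(1)\Rightarrow(2)$: strong irreducibility trivially gives irreducibility, so the real content is to extract proximality of the action on ${\bf RP}^{n-1}$ from the existence of a single proximal element. I would fix a proximal $g\in G$, with attracting fixed point $\xi^{+}\in{\bf RP}^{n-1}$ and repelling hyperplane $H\subset{\bf R}^{n}$, so that $g^{m}[w]\to\xi^{+}$ for every $w\notin H$. Given $x,y\in{\bf RP}^{n-1}$, it then suffices to find $h\in G$ with $hx\notin H$ and $hy\notin H$, for then the sequence $(g^{m}h)_{m}$ realizes the common limit $\xi^{+}$ for the pair $\{x,y\}$. To produce such an $h$ I would pass to the Zariski closure $\overline{G}$ of $G$ in $\SL(n,{\bf R})$ and its identity component $\overline{G}^{\circ}$. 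This is exactly the point at which \emph{strong} irreducibility (rather than mere irreducibility) is needed: $G\cap\overline{G}^{\circ}$ is a finite-index subgroup of $G$ which is Zariski dense in $\overline{G}^{\circ}$, so strong irreducibility forces $\overline{G}^{\circ}$ to act irreducibly on ${\bf R}^{n}$. Writing $H=\{w:\phi(w)=0\}$, the locus $\{a\in\overline{G}^{\circ}:\phi(ax)=0\}$ is then Zariski closed and \emph{proper} in $\overline{G}^{\circ}$ (were it everything, $\overline{G}^{\circ}x$ would span a proper $\overline{G}^{\circ}$-invariant subspace of ${\bf R}^{n}$), and likewise for $y$; since $\overline{G}^{\circ}$ is an irreducible variety, the union of these two proper closed subsets is still proper, and the Zariski-dense subgroup $G\cap\overline{G}^{\circ}$ meets its complement, giving $h$.

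For $(2)\Rightarrow(1)$, I first produce a proximal element in $G$. Here I would invoke the standard fact (see \cite{Ben,Mar1,Mar2}) that proximality of the action on ${\bf RP}^{n-1}$ is equivalent to the closure of $\{[g]:g\in G\}$ in the projectivized matrix space ${\bf P}(M(n,{\bf R}))$ containing the class of a rank-one endomorphism; choose $g_{m}\in G$ with $g_{m}/\|g_{m}\|\to\pi=v\otimes\phi$ of rank one. Irreducibility provides $h\in G$ with $\phi(hv)\neq0$ (otherwise $Gv\subseteq\ker\phi$, contradicting irreducibility), and replacing $g_{m}$ by $hg_{m}$ and rescaling we may assume $\phi(v)=1$, i.e.\ $\pi$ is the projection of ${\bf R}^{n}$ onto the line ${\bf R}v$ along the hyperplane $\ker\phi$. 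The north--south perturbation lemma (again standard, \cite{Ben,Mar1,Mar2}) then shows that $g_{m}$ is itself proximal for all large $m$, with attracting point close to $[v]$; in particular $G$ is proximal.

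To finish $(2)\Rightarrow(1)$ I must show $G$ is strongly irreducible; suppose not. Replacing a witnessing finite-index subgroup by its normal core, there is a finite-index \emph{normal} subgroup $G'\triangleleft G$ admitting a proper nonzero invariant subspace; let $W$ be such a subspace of minimal dimension, hence $G'$-irreducible, and let $\{W_{1},\dots,W_{k}\}=\{gW:g\in G\}$ be its finite $G$-orbit. Irreducibility of $G$ forces $k\ge2$, and each $W_{i}$ is $G'$-invariant (since $G'$ is normal) and again $G'$-irreducible, so distinct members of the orbit meet in $\{0\}$. Now pick nonzero $x\in W_{1}$, $y\in W_{2}$ with $W_1\neq W_2$; by proximality of the action there is a sequence $(g_{m})$ in $G$ with $\lim_{m}g_{m}x=\lim_{m}g_{m}y=:z\in{\bf RP}^{n-1}$, and after passing to a subsequence $g_{m}W_{1}=W_{a}$, $g_{m}W_{2}=W_{b}$ for fixed indices $a\neq b$. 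Since ${\bf P}(W_a)$ and ${\bf P}(W_b)$ are closed in ${\bf RP}^{n-1}$ and contain $g_mx$ and $g_my$ respectively, we get $z\in{\bf P}(W_{a})\cap{\bf P}(W_{b})={\bf P}(W_{a}\cap W_{b})=\emptyset$, which is impossible. Hence $G$ is strongly irreducible, and the proof is complete.

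The genuine difficulty, as I see it, lies in the first half of $(2)\Rightarrow(1)$: producing a rank-one limit out of proximality of the projective action, and the perturbation lemma that upgrades it to an honest proximal element of $G$. These are the analytic core of the subject (Furstenberg, Benoist, Abels--Margulis--Soifer), and in a proof of this length I would cite them rather than reprove them. By contrast, in $(1)\Rightarrow(2)$ the single subtle point --- easy to miss --- is that one must argue on the \emph{connected} Zariski closure $\overline{G}^{\circ}$, and it is precisely there that the hypothesis ``strongly irreducible'', rather than just ``irreducible'', is indispensable.
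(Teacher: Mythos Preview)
The paper does not give its own proof of this statement: Theorem~\ref{proximal1} is quoted as a result of Goldsheid--Guivarc'h \cite[Thm~2.9]{GG} and is used only as background, so there is no argument in the paper to compare against.

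For what it is worth, your sketch is essentially the standard proof and is correct. In $(1)\Rightarrow(2)$ you have identified the genuine point precisely: one must argue on the identity component $\overline{G}^{\circ}$ of the Zariski closure so that the two ``bad'' loci $\{a:\phi(ax)=0\}$ and $\{a:\phi(ay)=0\}$ are proper closed subsets of an \emph{irreducible} variety, whence their union is still proper; and it is exactly here that \emph{strong} irreducibility (applied to the finite-index subgroup $G\cap\overline{G}^{\circ}$) is needed rather than mere irreducibility. In $(2)\Rightarrow(1)$ your reduction of ``proximal action'' to ``rank-one limit in ${\bf P}(M(n,{\bf R}))$'' together with the perturbation lemma is the standard analytic input (Furstenberg, Abels--Margulis--So\u{i}fer, Benoist), and your contradiction for strong irreducibility via the finite $G$-orbit $\{W_1,\dots,W_k\}$ of a minimal $G'$-invariant subspace is clean; the only thing to make explicit is that $a\neq b$ in your last step because $g_m$ is invertible, so $g_mW_1\neq g_mW_2$.
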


\begin{theorem}[{\cite[Thm 6.3]{AMS}}]\label{proximal2}
 Let $G$ be a semi-simple Lie group with finite center and let $\rho:G\to \SL(n,{\bf R})$ be an irreducible representation. Then the following are equivalent. 
 \begin{enumerate}
  \item $\rho$ is proximal.
  \item $\rho(G)$ is a proximal. 
 \end{enumerate}
\end{theorem}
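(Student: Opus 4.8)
The plan is to reduce both conditions to a statement about the restricted weights of $\rho$ and then compare them directly. Fix an Iwasawa decomposition $G=KAN$, write $\mathfrak{a}$ for the Lie algebra of $A$, let $\Sigma^{+}$ be the positive restricted roots attached to $N$, and let $\overline{\mathfrak{a}^{+}}=\{H\in\mathfrak{a}\mid \alpha(H)\ge 0\ \text{for all}\ \alpha\in\Sigma^{+}\}$ be the closed positive Weyl chamber. Since every $H\in\mathfrak{a}$ is a hyperbolic semisimple element of the Lie algebra, $d\rho(H)$ is ${\bf R}$-diagonalizable, so one may decompose ${\bf R}^{n}=\bigoplus_{\lambda}V_{\lambda}$ into restricted weight spaces, with $\rho(\exp H)$ acting on $V_{\lambda}$ as the scalar $e^{\lambda(H)}$. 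Let $\lambda_{0}$ be the highest restricted weight; by the definition recalled above, $\rho$ is proximal precisely when $\dim V_{\lambda_{0}}=1$. The structural fact I would use repeatedly is that every restricted weight $\lambda$ satisfies $\lambda_{0}-\lambda\in\sum_{\alpha\in\Sigma^{+}}{\bf Z}_{\ge 0}\,\alpha$; hence $\lambda_{0}(H)\ge\lambda(H)$ for every $H\in\overline{\mathfrak{a}^{+}}$, strictly so when $H$ lies in the open chamber.

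For $(1)\Rightarrow(2)$ I would take $H$ in the open positive chamber and set $a=\exp H$. Then $\rho(a)$ has eigenvalues $e^{\lambda(H)}$ with multiplicity $\dim V_{\lambda}$, all of them real and positive; the largest, $e^{\lambda_{0}(H)}$, is strictly larger than all the others and occurs with multiplicity $\dim V_{\lambda_{0}}=1$. Thus $\rho(a)$ is proximal, so $\rho(G)$ is a proximal group.

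For $(2)\Rightarrow(1)$ I would argue contrapositively: assuming $\dim V_{\lambda_{0}}\ge 2$, I would show that $\rho(g)$ is non-proximal for every $g\in G$. Use the complete multiplicative Jordan decomposition $g=g_{e}g_{h}g_{u}$ in $G$ into commuting elliptic, hyperbolic and unipotent parts; after conjugating $g$ (which changes neither hypothesis nor conclusion) we may assume $g_{h}=\exp H$ with $H\in\overline{\mathfrak{a}^{+}}$. Because $g_{e}$ and $g_{u}$ commute with $g_{h}$, the operators $\rho(g_{e})$ and $\rho(g_{u})$ preserve every eigenspace of $\rho(g_{h})$; moreover $\rho(g_{e})$ has all eigenvalues of modulus $1$ (this is where the finite center is used, to force the elliptic part into a compact subgroup) and $\rho(g_{u})$ is unipotent. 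Setting $m=\max_{\lambda}\lambda(H)$ and $W=\bigoplus_{\lambda(H)=m}V_{\lambda}$, it follows that the sum of the generalized eigenspaces of $\rho(g)$ belonging to eigenvalues of maximal modulus is exactly $W$. Since $\lambda_{0}(H)=m$ by the dominance property, $V_{\lambda_{0}}\subseteq W$, so $\dim W\ge\dim V_{\lambda_{0}}\ge 2$; but a proximal transformation has one-dimensional top generalized eigenspace, so $\rho(g)$ is not proximal. As $g$ was arbitrary, $\rho(G)$ contains no proximal element, which is the required contrapositive.

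The real content lies in the structural inputs rather than in the bookkeeping above: that $d\rho$ carries hyperbolic semisimple elements of the Lie algebra to ${\bf R}$-diagonalizable operators (so the restricted weight decomposition is available), that the highest restricted weight dominates every other restricted weight on $\overline{\mathfrak{a}^{+}}$, and that the complete Jordan decomposition exists in $G$ with its hyperbolic part conjugable into $\overline{\mathfrak{a}^{+}}$. These are classical facts of structure theory (see e.g. Helgason, Warner, or Borel--Tits), but assembling them carefully --- together with fixing once and for all the convention that ``proximal'' requires the dominant eigenvalue to be simple --- is exactly why the paper just quotes \cite[Thm 6.3]{AMS}.
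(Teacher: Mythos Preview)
The paper does not supply a proof of this theorem at all: it is stated with the attribution \cite[Thm 6.3]{AMS} and then simply used in the Zariski denseness argument. So there is no ``paper's own proof'' to compare your proposal against.

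Your argument is nonetheless sound and is essentially the standard proof one finds in the literature. The direction $(1)\Rightarrow(2)$ is immediate once one picks a regular element of $A$. For $(2)\Rightarrow(1)$ your use of the complete multiplicative Jordan decomposition, the conjugation of the hyperbolic part into $\exp(\overline{\mathfrak a^{+}})$, and the observation that $V_{\lambda_0}$ sits inside the top $\rho(g_h)$-eigenspace are exactly the right ingredients; the finite-center hypothesis is invoked at the correct place (to force the elliptic factor into a compact subgroup and hence give $\rho(g_e)$ unit-modulus eigenvalues). The only point worth stating a shade more carefully is that what $\rho(g_e)$ and $\rho(g_u)$ preserve are the eigenspaces of $\rho(g_h)$, namely the blocks $W_c=\bigoplus_{\lambda(H)=c}V_\lambda$, rather than the individual $V_\lambda$; but since your conclusion only uses $V_{\lambda_0}\subseteq W_m$, this does not affect the argument.
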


Given a strongly irreducible and proximal subgroup $G\subset \SL(n,{\bf R})$ we can define the \emph{limit set} of $G$, which we denote by $\Lambda_G$ as the closure of the set of attractive fixed points in $\RP^{n-1}$ of the proximal elements of $G$. In \cite[Thm 2.3]{GG} it is shown that the action of $G$ on $\Lambda_G$ is minimal (i.e.\ any non-empty closed $G$-invariant subset of $\RP^{n-1}$ contains $\Lambda_G$). 

When $G$ is a Zariski closed semi-simple subgroup of $\SL(n,\R)$ we can describe $\Lambda_G$ more explicitly. Since $G$ is proximal, $\Lambda_G$ is non-empty and so let $x\in \Lambda_G$. Consider the orbit $G\cdot x$ of $x$. First, observe that $G\cdot x\subset \Lambda_G$ by $G$-invariance. Furthermore, since $G\cdot x$ is the orbit of an algebraic group acting algebraically on a variety we have that $G\cdot x$ is a smooth subvariety of $\RP^{n-1}$. If we let $H$ be the Zariski closure of $G\cdot x$ then we see that $G\cdot x$ is open in $H$ and so $H\backslash (G\cdot x)$ is a Zariski closed subset of $\RP^{n-1}$. Since Zariski closed sets are closed in the standard topology, minimality implies that $H\backslash( G\cdot x)$ is either empty or $\Lambda_G$. However, $x\notin H\backslash( G\cdot x)$ and so $H\backslash (G\cdot x)$ is empty. As a result we see that $G\cdot x$ is a non-empty closed $G$-invariant subset and thus $G\cdot x=\Lambda_G$. By minimality of the action on the limit set we also find that 
$\Lambda_G$ is the \emph{unique} closed orbit for the action of $G$ on $\RP^{n-1}$.  

We can say even more. Let $G=KAN$ be an Iwasawa decomposition and let $x_N$ be the (unique) point in $\RP^{n-1}$ corresponding to a highest weight vector. By our choice of $x_N$ we see that the groups $A$ and $N$ both fix $x_N$ and so $G\cdot x_N=K\cdot x_N$. Thus the orbit of $x_N$ is closed and so $G\cdot x_N=\Lambda_G$ (i.e.\ the limit set of $G$ is the orbit of a compact group.)

We can now identify the Zariski closure of $\phi_t(\Gamma)$, which we denote $G_t$. The representation $\phi_{1/2}$ corresponds to the holonomy of the complete finite volume hyperbolic structure on the figure-eight knot and so by the Borel density theorem we have $G_{1/2}=\SO(3,1)$. The main goal of this section is to show that when $t\neq 1/2$ that $G_t=\SL(4,{\bf R})$. The proof is based on the following heuristic. 
$$\textit{A Lie subgroup of $\SL(4,{\bf R})$ with a large orbit in ${\bf RP}^3$ must be large.}$$

\begin{theorem}
If $t\neq 1/2$ then $G_t=\SL(4,{\bf R})$. In particular, for $t\neq 1/2$, $\phi_t(\Gamma)$ is Zariski dense.  
\end{theorem}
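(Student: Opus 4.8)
The plan is to classify the Zariski closure $G_t$ among the (finitely many up to conjugacy) connected Lie subgroups of $\SL(4,\mathbf{R})$ that can act on $\mathbf{RP}^3$ with a sufficiently large orbit, and then to eliminate all the proper candidates. First I would observe that, since finite-index subgroups of $\Gamma$ are irreducible (Theorem \ref{irreducible}, together with the fact that the $\rho_v$ are conjugate to the $\phi_t$), the group $\phi_t(\Gamma)$ is strongly irreducible, hence so is its Zariski closure $G_t$; in particular $G_t$ is reductive with no proper invariant subspace, and its connected component $G_t^\circ$ is semisimple (the center being a torus that must act by scalars, hence trivially in $\SL(4,\mathbf{R})$ modulo the finite center). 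Next I would bring in the projective geometry established in \S\ref{faithful}: for $t \in (0,\infty)$ the representation $\phi_t$ is the holonomy of a properly convex projective structure on $M$, so $\phi_t(\Gamma)$ preserves a properly convex open domain $\Omega_t \subset \mathbf{RP}^3$, and hence so does $G_t$ (the stabilizer of a properly convex domain is Zariski closed). This already forces $G_t$ to be \emph{positive proximal} in the sense defined above — by Benoist's work, a group preserving a properly convex open set is exactly a positive proximal group — and in particular $G_t$ contains proximal elements, so $G_t^\circ$ is a noncompact semisimple group admitting an irreducible proximal representation into $\SL(4,\mathbf{R})$.

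Now I would enumerate the possibilities. An irreducible $4$-dimensional representation of a noncompact semisimple Lie group, modulo scalars, lands in one of: $\SL(4,\mathbf{R})$ itself; $\Sp(4,\mathbf{R})$; $\SO(3,1)$ (acting on $\mathbf{R}^4$ by the standard representation); $\SO(2,2)$; $\SL(2,\mathbf{R})$ acting via the symmetric cube $\mathrm{Sym}^3$; or $\SL(2,\mathbf{R})\times\SL(2,\mathbf{R})$ acting as $\mathrm{Sym}^1\otimes\mathrm{Sym}^1$ (which is $\SO(2,2)$), or $\SU(2)$-type compact groups which are excluded by noncompactness/proximality. Of these, the positive proximal ones preserving a properly convex domain are $\SL(4,\mathbf{R})$, $\Sp(4,\mathbf{R})$, $\SO(3,1)$, and the $\mathrm{Sym}^3 \SL(2,\mathbf{R})$ (the latter two are the automorphism groups of the ellipsoid and the twisted cubic cone respectively); $\SO(2,2)$ is not positive proximal since its proximal elements are not all positive, so it is eliminated. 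To rule out $\SO(3,1)$ I would use the heuristic made precise: $\SO(3,1)$ has all its orbits on $\mathbf{RP}^3$ of dimension $\le 3$ but more to the point, if $G_t \subseteq \SO(3,1)$ up to conjugacy then $\phi_t$ preserves a quadratic form, which by a direct computation on the generators in Appendix \ref{matrices} happens only when $t = 1/2$; alternatively, the symmetric square representation $\mathrm{Sym}^2(\phi_t)$ would contain a trivial summand, a condition one checks fails for $t \ne 1/2$. To rule out $\Sp(4,\mathbf{R})$ one checks that $\phi_t$ preserves no symplectic (nondegenerate alternating) form — again a finite linear computation on generators — and to rule out $\mathrm{Sym}^3 \SL(2,\mathbf{R})$ one notes this group has a \emph{unique} closed orbit which is a curve (the twisted cubic), so the Zariski closure of the limit set would be $1$-dimensional; but the limit set of $\phi_t(\Gamma)$ contains the attracting fixed points of $\phi_t$ applied to, say, the two generators and their products, and computing a few of these shows they are not coplanar along a twisted cubic, contradicting the $1$-dimensionality. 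Thus the only remaining possibility is $G_t = \SL(4,\mathbf{R})$.

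The cleanest organizing principle, and the one I would actually write up, uses the limit-set description established just above the theorem statement: when $G$ is Zariski closed and semisimple, $\Lambda_G = K\cdot x_N$ is the orbit of a maximal compact $K$, and this orbit is the unique closed orbit of $G$ on $\mathbf{RP}^3$. So the strategy is: (i) show $\phi_t(\Gamma)$ is strongly irreducible and preserves a properly convex $\Omega_t$, hence its Zariski closure $G_t$ is a semisimple, positive proximal, Zariski-closed subgroup; (ii) compute (or bound below) the dimension of the limit set $\Lambda_{G_t}$ — since $\phi_t(\Gamma)$ acts on $\Omega_t$ with limit set equal to the closure of the $\phi_t(\Gamma)$-orbit of any boundary extreme point, and since the figure-eight group is not virtually a surface group or virtually cyclic, this limit set is not contained in any proper projective subspace nor in a twisted cubic, so $\dim \Lambda_{G_t} \ge 2$; (iii) run down the list of semisimple $G \le \SL(4,\mathbf{R})$ acting irreducibly, compute $\dim(K\cdot x_N)$ in each case ($\SO(3,1)$: $2$; $\Sp(4,\mathbf{R})$: $3$; $\mathrm{Sym}^3\SL(2,\mathbf{R})$: $1$; $\SL(4,\mathbf{R})$: $3$), and then separately eliminate $\SO(3,1)$ and $\Sp(4,\mathbf{R})$ by the form-invariance computations since their orbit dimensions alone do not finish the job.

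The main obstacle I anticipate is step (ii): making rigorous the claim that the limit set of $\phi_t(\Gamma)$ is genuinely $2$-dimensional (equivalently, not contained in a plane, a conic, or a twisted cubic) without simply invoking ``$\phi_t$ is not $\SO(3,1)$ or $\mathrm{Sym}^3\SL_2$,'' which would be circular. The honest route is the explicit-computation one: for $t \ne 1/2$, verify directly from the generator matrices that $\phi_t(\Gamma)$ preserves no nondegenerate symmetric bilinear form and no nondegenerate alternating form (these are linear conditions, finitely checkable, with the symmetric case reducing to the known fact that the trace field / invariant form exists only at the complete structure), and that it is not conjugate into $\mathrm{Sym}^3\SL(2,\mathbf{R})$ (e.g.\ because that group has a $4$-dimensional representation whose exterior square is reducible in a specific way, or because its elements satisfy a polynomial relation among eigenvalues that $\phi_t$ of some word violates). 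Once all proper reductive overgroups are excluded, strong irreducibility forces $G_t^\circ = \SL(4,\mathbf{R})$, and since $\SL(4,\mathbf{R})$ is its own normalizer with trivial outer part relevant here, $G_t = \SL(4,\mathbf{R})$, giving Zariski density of $\phi_t(\Gamma)$. \qed
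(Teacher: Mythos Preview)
Your overall strategy---classify the possible semisimple Zariski closures acting irreducibly on $\mathbf{R}^4$ and eliminate the proper ones by checking invariant bilinear forms---is a valid alternative, and the paper itself notes in its closing Remark that such algebraic/computational routes exist. But it is genuinely different from the paper's argument, and two points deserve correction or comment.

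First, the parenthetical ``the stabilizer of a properly convex domain is Zariski closed'' is false in general: for the open simplex the stabilizer is the group of positive diagonal matrices, whose Zariski closure is the full diagonal torus. What you actually need (and what Benoist proves) is that the Zariski closure of a group preserving a properly convex cone again preserves one; equivalently, positive proximality passes to the Zariski closure. Your conclusion survives, but the justification should be replaced.

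Second, and more substantively, you correctly identify step (ii)---showing $\dim\Lambda_{G_t}\ge 2$ without circularity---as the obstacle, and you retreat to explicit form computations to finish. The paper's proof avoids this retreat by a geometric idea you are missing: the Zariski closure of the \emph{peripheral} subgroup $\phi_t(\Gamma_p)$ contains an explicit $2$-dimensional abelian Lie group $H$ (written down in coordinates), and a generic $H$-orbit in $\mathbf{RP}^3$ is already a $2$-dimensional surface whose closure is the boundary of a properly convex domain $\Omega$. This forces $\Lambda_{G_t^0}$ to contain $\partial\Omega$, hence to have codimension $\le 1$. The dichotomy is then: either $\Lambda_{G_t^0}=\mathbf{RP}^3$, in which case Benoist gives $G_t^0=\SL(4,\mathbf{R})$ directly; or $\Lambda_{G_t^0}=\partial\Omega$ and the maximal compact $K$ acts transitively on it. In the latter case a John--ellipsoid argument (the $K$-invariant ellipsoid of maximal volume inside $\Omega$ must touch $\partial\Omega$, and transitivity forces $\partial\Omega=\partial\mathcal{E}$) shows $\Omega$ is an ellipsoid, so $\phi_t(\Gamma)\subset\SO(3,1)$; this is then excluded by a single eigenvalue check on the longitude. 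No enumeration of semisimple subgroups and no symplectic-form computation is needed.

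So: your approach works but leans on case analysis and generator computations; the paper's approach trades those for the single observation that the cusp holonomy already has $2$-dimensional orbits, which is exactly the ``large orbit $\Rightarrow$ large group'' heuristic made effective.
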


\begin{proof}
 Let $t\neq 1/2$. We have previously seen that $\rho_t$ is an absolutely irreducible representation whose image preserves a properly convex open subset of $\RP^3$. We first show that $\rho_t(\Gamma)$ is strongly irreducible. Suppose for contradiction that $\rho_t(\Gamma)$ is not strongly irreducible. By work of Benoist \cite[Lem 2.9]{Ben} we can find a finite index subgroup of $\Gamma$ that splits as a non-trivial direct sum. However, any finite index subgroup of $\Gamma$ is the fundamental group of a finite volume hyperbolic 3-manifold and such groups never admit non-trivial direct sum decompositions.
 
 Let $G^0_t$ be the connected component of $G_t$ containing the identity. We claim that $G^0_t$ is semi-simple. To see this observe that by \cite[Lem 2.6]{GG} we find that $G^0_t$ acts irreducibly on $\R^4$. As a result the action of $G^0_t$ turns $\R^4$ into a simple $\R[G^0_t]$ module. Let $R_t$ be the unipotent radical of $G^0_t$. Since $R_t$ is unipotent and solvable the Lie-Kolchin theorem implies that there is a non-trivial $\C[R_t]$-submodule, $E_\C$, of similtaneous 1-eigenvectors of $R_t$ in ${\bf C}^4$. A simple computation shows that $E_\C$ is conjugation invariant and so there is a non-trivial $\R[R_t]$-submodule, $E_\R\subset \R^4$, of similtaneous 1-eigenvectors of $R_t$ whose complexification is $E_\C$. However, since $R_t$ is normal in $G^0_t$ we see that $E_\R$ is a non-trivial $\R[G^0_t]$-submodule. However, simplicity implies that this submodule must be all of ${\bf R}^4$. Therefore $R_t$ acts trivially on ${\bf R}^4$ and is thus trivial. We conclude that $G^0_t$ is reductive. Furthermore, 
since $G^0_t$ is proximal and irreducible we see that it has trivial center and is thus semi-simple.

 
 The group $\phi_t(\Gamma)$ is proximal and so Theorem \ref{proximal2} implies that the representation induced by the inclusion of $G^0_t$ into $\SL(4,\R)$ is also proximal.
 
Next we show that $\Lambda_{G^0_t}$ contains a codimension-1 submanifold. Let $\Gamma_p$ be a peripheral subgroup of $\Gamma$. By work of \cite[\S 6]{Bal} it is possible to conjugate so that $\phi_t(\Gamma_p)$ to a lattice in the 2-dimensional abelian Lie group $H$ of matrices of the form 
$$\begin{pmatrix}
   1 & 0 & s & s^2/2-t\\
   0 & e^t & 0 & 0\\
   0 & 0 & 1 & s\\
   0 & 0 & 0 & 1
  \end{pmatrix}
$$
Thus we see that the Zariski closure of $\phi_t(\Gamma_p)$ (and hence $G_t$) contains $H$. A generic orbit of $H$ can be written in homogenous coordinates as
$$\{[-\log(\left|x\right|)+y^2/2+c:\epsilon x:y:1]\mid x>0\},$$
where $\epsilon\in\{\pm 1\}$ and $c\in \R$ (see Figure \ref{domain}). Since $G^0_t$ is irreducible we see that $\Lambda_{G^0_t}$ contains a point, $z$ of one of these orbits and so $H\cdot z\subset G\cdot z\subset \Lambda_{G^0_t}$. Furthermore, since $\Lambda_{G^0_t}$ is closed we see that it contains the closure of this orbit, which is the boundary of a properly convex domain $\Omega\subset \RP^3$. 

\begin{figure}
 \begin{center}
  \includegraphics[scale=.6]{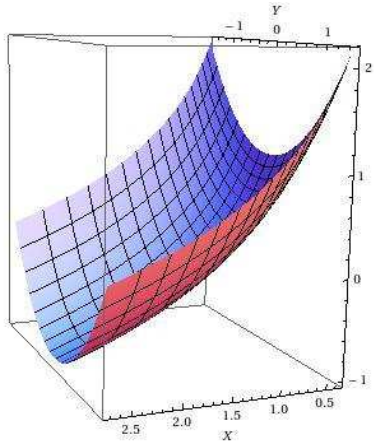}
  \caption{\label{domain} A generic orbit of $H$}
 \end{center}
\end{figure}

There are now two cases to consider: either $\Lambda_{G^0_t}$ has codimension 0 and is thus equal to $\RP^3$ or $\Lambda_{G^0_t}$ is a codimension 1 submanifold of $\RP^3$ and is thus equal to $\partial \Omega$. In the first case work of Benoist \cite[Lem 3.9 \& Cor 3.5]{Ben} shows that $G^0_t=\SL(4,\R)$, in which case we are done.

We now rule out the second case. In this case we see that there is a point $w\in \RP^3$ such that $\Lambda_{G^0_t}=K\cdot w=\partial \Omega$, where $K$ is a maximal compact subgroup of $G^0_t$. As a result we see that (up to a subgroup of index 2) $K$ preserves $\Omega$. Since $K$ is compact we see that there is a point $x_0\in \Omega$  such that both $x_0$ and its dual point $x_0\in \Omega^\ast$\footnote{see \cite[\S 9]{Mar1} for a definition of dual point of properly convex domains} are both fixed by $K$. The hyperplane dual to $x_0^\ast$ provides us an affine patch containing $\Omega$ and for which $x_0$ is the center of mass of $\Omega$. In these affine coordinates $K$ acts by affine transformations fixing $x_0$. 

If we let $\mathcal{E}$ be a John ellipsoid\footnote{A John ellipsoid for a convex subset $\Omega$ of affine space is an ellipsoid of maximal Euclidean volume contained in $\Omega$ with the same center of mass} for $\Omega$ centered at $x_0$ in this affine patch, then we see that $K$ preserves $\mathcal{E}$. Since $\mathcal{E}$ is has maximal volume $\partial \mathcal{E}$ has nonempty intersection with $\partial \Omega$. Since $K$ acts transitively on $\partial \Omega$ we see that $\partial \Omega \subset \partial \mathcal{E}$. 

For dimensional reasons we see that $\partial \Omega$ has non-empty interior in $\partial \mathcal{E}$. Since $\partial \Omega$ is a $K$-orbit we see that it is an open subset of $\partial \mathcal{E}$. We conclude that $\partial \Omega=\partial \mathcal{E}$ and thus $\Omega$ is an ellipsoid. Since $\phi_t(\Gamma)$ preserves this ellipsoid we see that it is conjugate to a subgroup of $\SO(3,1)$. This is a contradiction since, for example the image of the longitude has an eigenvalue whose inverse is not also an eigenvalue.   
\end{proof}
\noindent
{\bf Remark.}
Given the explicit nature of the subgroups in this paper there are alternative, more algebraic ways of proving Zariski denseness. For example, it suffices to show 
that the adjoint action of $\rho_t(\Gamma)$ on $\sl_4$ is irreducible (a Mathematica notebook with irreducibility calculations for our groups can be found at \cite{web2}). 
The anonymous referee also pointed out other techniques that could be employed to show that such explicit subgroups are Zariski dense (see \cite{Riv} and 
\cite[Thm 9.10]{PraRap}). We chose to use the above proof because of its geometric nature which highlights the relationship between convex projective structures 
and the Zariski closure of their holonomy representations. We plan to pursue this relationship in more detail in future work.

\appendix
\section{The matrices}\label{matrices}

%
%
%
%
%
%
%
%
%
 The discrete faithful representation corresponding to the hyperbolic structure occurs at $v=1$.
 $$\rho_v(x)=  \left( \begin{array}{cccc}
\frac{3}{2} &  \frac{1}{2}  &    \frac{1}{2}(1+1/v) &    \frac{1}{\sqrt{12}} (1 - 1/v)\\ 
-\frac{1}{2}  & \frac{1}{2}  & -   \frac{1}{2}(1+1/v)  & - \frac{1}{\sqrt{12}} (1 - 1/v)\\ 
1 & 1 &1 & 0\\
0& 0 &0 &1 
\end{array}\right)$$

$$\rho_v(y)=  \left( \begin{array}{cccc}
v +  \frac{1}{2} & -v +  \frac{1}{2}   &   \frac{1}{2}  &    \frac{1}{\sqrt{12}} (7-4v)\\ 
  \frac{1}{2}  &  \frac{1}{2}   &   \frac{1}{2}  &   \frac{1}{\sqrt{12}} (4/v-1)\\ 
 \frac{1}{2} v& -\frac{1}{2} v  &  1 &    \frac{1}{\sqrt{3}}(1-v)\\ 
  \frac{\sqrt{3}}{2}v &   -\frac{\sqrt{3}}{2}v  &  0 &  2 - v 
\end{array}\right)$$

%
%
%
%
The discrete faithful representation corresponding to the hyperbolic structure occurs at $t=1/2$.
$$\phi_t(x)=  \left( \begin{array}{cccc}
1 & 0 &1 & t - 1 \\
  0 & 1 &1 & t \\
 0 & 0 &1 & t + \frac{1}{2} \\
 0 & 0 &0 & 1\\
\end{array}\right)$$

$$\phi_t(y)=  \left( \begin{array}{cccc}
1 & 0 &0 & 0 \\
  2+ 1/t & 1 &0 & 0 \\
 2 & 1 &1 & 0\\
 1 & 1 &0 & 1\\
\end{array}\right)$$
The representations become conjugate for $v=2t$.

%
%
%
%

%
%
%
Department of Mathematics,\\ University of California,\\ Santa Barbara, 
CA 93106.\\
\noindent Email:~long@math.ucsb.edu
\\

\noindent Department of Mathematics,\\ University of California,\\ Santa Barbara, 
CA 93106.\\
\noindent Email:~sballas@math.ucsb.edu\\[\baselineskip]

\end{document}